\newtheorem{thm}{Theorem}
\newtheorem{cor}[thm]{Corollary}
\newtheorem{prop}[thm]{Proposition}
\newtheorem{lem}[thm]{Lemma}
\newtheorem{clm}[thm]{Claim}
\theoremstyle{definition}
\newtheorem{dfn}[thm]{Definition}
\newtheorem{rem}[thm]{Remark}
\newtheorem{ex}[thm]{Example}
\newtheorem{conj}[thm]{Conjecture}
\newtheorem{conv}[thm]{Convention}
\numberwithin{thm}{section}
\numberwithin{equation}{section}
\DeclareMathOperator{\vol}{vol}
\DeclareMathOperator{\Ric}{Ric}
\DeclareMathOperator{\Sc}{Sc}
\DeclareMathOperator{\Hess}{Hess}
\DeclareMathOperator{\di}{div}
\DeclareMathOperator{\II}{II}
\newcommand{\step}[1]{\medskip\noindent\textit{#1.}}
\newcommand{\pack}[1]{{#1}\mathchar`-\mathrm{pack}}
\title[A lower bound for the curvature integral]{A lower bound for the curvature integral\\under an upper curvature bound}
\author[T. Fujioka]{Tadashi Fujioka}
\address{Department of Mathematics, Osaka University, Toyonaka, Osaka 560-0043, Japan}
\email{fujioka@cr.math.sci.osaka-u.ac.jp, tfujioka210@gmail.com}
\date{\today}
\subjclass[2020]{53B21, 53C20, 53C21, 53C23}
\keywords{Sectional curvature, scalar curvature, Gromov-Hausdorff convergence, GCBA spaces, strainers}
\thanks{Supported by JSPS KAKENHI Grant Number 22KJ2099}
\begin{document}

\begin{abstract}
We prove that the integral of scalar curvature over a Riemannian manifold is uniformly bounded below in terms of its dimension, upper bounds on sectional curvature and volume, and a lower bound on injectivity radius.
This is an analogue of an earlier result of Petrunin for Riemannian manifolds with sectional curvature bounded below.
\end{abstract}

\maketitle

\section{Introduction}\label{sec:intro}

\subsection{Main result}\label{sec:main}

In this paper we prove the following.
All manifolds are assumed to be complete and of dimension $\ge 2$.
Let $\Sc$ denote scalar curvature.

\begin{thm}\label{thm:main}
Let $M$ be an $n$-dimensional Riemannian manifold with sectional curvature $\le 1$, injectivity radius $\ge r>0$, and volume $\le V$.
Then we have
\[\int_{M}\Sc\ge C(n,r,V),\]
where $C(n,r,V)$ is a (negative) constant depending only on $n$, $r$, and $V$.
\end{thm}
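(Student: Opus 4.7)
The plan is to reduce the global lower bound to a uniform estimate on small balls via a packing argument, and then to prove this local estimate using the Jacobi equation.

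\emph{Packing.} Because the sectional curvature is bounded above by $1$, G\"unther's inequality gives a uniform lower bound $v_0(n,\rho)$ on the volume of every ball $B_\rho(p)\subset M$ whenever $\rho\le\min(r,\pi/2)$. Fixing such a $\rho=\rho(n,r)$ and choosing a maximal $\rho$-separated net $\{p_i\}_{i=1}^N$, the disjoint balls $B_{\rho/2}(p_i)$ yield
\[
N\le V/v_0(n,\rho/2)=:N(n,r,V).
\]
Since the balls $B_\rho(p_i)$ cover $M$, it will suffice to prove the uniform local estimate
\[
\int_{B_\rho(p)}\Sc\ge -C_0(n,r)\qquad\text{for every }p\in M.
\]
Summing over the $N$ covering balls, and using the pointwise upper bound $\Sc\le n(n-1)$ (from the sectional curvature bound) to absorb the overlap correction, will yield the global constant $C(n,r,V)$.

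\emph{Local estimate.} Fix $p\in M$. The hypothesis on the injectivity radius makes $\exp_p$ a diffeomorphism on $B_r(0)\subset T_pM$, and I can write the volume form in geodesic polar coordinates as $\Theta(t,v)\,dt\,d\omega(v)$ on $(0,\rho)\times S^{n-1}$, where $S^{n-1}$ denotes the unit sphere in $T_pM$. Along each radial geodesic $\gamma_v$, the mean curvature $\operatorname{tr}A(t,v)=(\log\Theta)'$ of geodesic spheres satisfies the Riccati equation
\[
(\operatorname{tr}A)'+|A|^2+\Ric(\dot\gamma_v,\dot\gamma_v)=0.
\]
The plan is to multiply by a carefully chosen nonnegative radial weight $w(t)$ (vanishing fast enough at $0$ to absorb the $1/t$ singularity of $\operatorname{tr}A$), integrate in $t$, and then integrate over $v\in S^{n-1}$, invoking the angular identity
\[
\int_{S^{n-1}}\Ric(v,v)\,d\omega(v)=\tfrac{\omega_{n-1}}{n}\Sc(p).
\]
The sectional curvature bound enters through G\"unther's pointwise inequality $\Theta(t,v)\ge\sin^{n-1}t$, which controls the Jacobian weight from below, and through a comparison bound on $\operatorname{tr}A$ at $t=\rho$; the nonnegative shear term $|A|^2-(\operatorname{tr}A)^2/(n-1)$ is discarded in the favorable direction.

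\emph{Main obstacle.} The hard part is the local estimate. A single base point $p$ only captures $\Ric$ along radial geodesics, whereas $\Sc$ at an interior point $q\in B_\rho(p)$ is an average over \emph{all} tangent $2$-planes at $q$; either the averaging has to be performed over base points as well (using Fubini across a family of exponential charts centered at the points of $B_\rho(p)$), or a more subtle integral identity must convert the radial Ricci integral into a full scalar curvature integral over the ball. Ensuring that the boundary terms remain bounded, uniformly in $p$, by functions of $n$ and $r$ alone is a second technical point. I expect the strainer machinery of GCBA spaces, alluded to in the keywords, to supply both the right averaging framework and the uniform geometric control on small balls needed to carry this through.
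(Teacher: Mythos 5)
Your approach is genuinely different from the paper's, and it contains a fatal gap that precedes the obstacle you flag at the end. The paper works via Gromov--Hausdorff compactness, strainer maps and a rescaling theorem for GCBA limits, reverse induction on the codimension of strained fibers, a Gauss--Bonnet/homotopy-stability argument in the base codimension-$2$ case, and the Gauss/Bochner/coarea formulas for the induction step. You instead propose to reduce everything to a local estimate on balls and prove that estimate by the Riccati equation in geodesic polar coordinates.

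The flaw is in the reduction itself: the local estimate you want, $\int_{B_\rho(p)}\Sc\ge -C_0(n,r)$ with $C_0$ depending only on $n$ and $r$, is false. Take $n=2$, $\rho=r=1$, and for $\lambda\to\infty$ a closed hyperbolic surface of constant curvature $-\lambda$ whose systole is at least $2$ (this forces genus $g\gtrsim e^{2\sqrt\lambda}$, which is allowed because you imposed no volume bound on the ball estimate). Then $\sec\le 1$ and $\operatorname{inj}\ge 1$ hold, while
\[
\int_{B_1(p)}K \;=\; -\lambda\,\vol B_1(p)\;=\;-2\pi\bigl(\cosh\sqrt{\lambda}-1\bigr)\;\longrightarrow\;-\infty.
\]
So no bound of the form $-C_0(n,r)$ exists. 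The volume bound $V$ cannot be relegated to the covering multiplicity $N(n,r,V)$: it must enter the local estimate itself, and the only way it can enter is through a genuinely global constraint (in the paper's proof it shows up through the Gromov--Hausdorff compactness of $\mathcal M(n,r,V)$, the Lytchak--Nagano homotopy-stability theorem bounding $\chi(M_j)$ in the $2$-dimensional base case, and the packing bound on spaces of directions that drives the induction on the rescaled limit). A purely local argument inside a single exponential chart has no access to $V$, so the Riccati/G\"unther machinery by itself cannot close the gap.

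The ``main obstacle'' you identify, namely that a single base point only sees radial Ricci curvature, is real but secondary: even if the averaging over base points were carried out cleanly, it would be averaging toward a statement that is already false. To salvage the strategy you would need some mechanism, distinct from the packing of $\rho$-balls, by which the volume hypothesis constrains curvature inside a fixed ball; the paper supplies exactly this via a contradiction argument along a GH-convergent sequence, a rescaling theorem that shrinks the space of directions, and Gauss--Bonnet on $2$-dimensional strained fibers, none of which have a counterpart in your sketch.
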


This is an analogue of the following result due to Petrunin \cite{Pet:int} (cf.\ \cite[\S6]{Li:c2}).

\begin{thm}[Petrunin \cite{Pet:int}]\label{thm:pet}
Let $M$ be an $n$-dimensional Riemannian manifold with sectional curvature $\ge-1$ and diameter $\le D$.
Then we have
\[\int_{M}\Sc\le C(n,D),\]
where $C(n,D)$ is a (positive) constant depending only on $n$ and $D$.
\end{thm}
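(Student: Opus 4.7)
The strategy follows Petrunin's original approach in \cite{Pet:int}: although $\Sc$ has no pointwise upper bound under a lower sectional curvature bound, its integral is controlled by Alexandrov-geometric volume and angle comparisons. My plan has three steps, with the second being where the real difficulty lies.

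First (global setup), Bishop--Gromov comparison gives $\vol(M)\le V_{-1}(D)$, where $V_{-1}(r)$ denotes the volume of a ball of radius $r$ in hyperbolic $n$-space. The relative monotonicity of $\vol(B_r(p))/V_{-1}(r)$ in $r$ yields $\vol(B_r(p))\ge V_{-1}(r)\vol(M)/V_{-1}(D)$ for all $p$ and $r\le D$, which bounds the packing number of $M$ at any fixed scale by a function of $n$ and $D$ alone.

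Second (local estimate), I would prove a bound of the form
\[
\int_{B_r(p)}\bigl(\Sc+n(n-1)\bigr)\, d\vol \le C(n)\, r^{n-2}
\]
for all $p\in M$ and $r\le r_0(n)$. The heuristic is the volume expansion
\[
V_{-1}(r)-\vol(B_r(q)) = c_n\, r^{n+2}\bigl(\Sc(q)+n(n-1)\bigr) + O(r^{n+4}),
\]
together with the Bishop inequality $V_{-1}(r)-\vol(B_r(q))\ge 0$ and the trivial bound $V_{-1}(r)\vol(B_r(p))\le V_{-1}(r)^2\sim r^{2n}$. Integrating the nonnegative quantity $V_{-1}(r)-\vol(B_r(q))$ over $q\in B_r(p)$ would formally give the stated estimate, provided the expansion holds uniformly over $B_r(p)$. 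Third (globalization), one chooses a maximal $r_0$-separated net $\{p_i\}\subset M$, whose cardinality is bounded by Step 1, then sums the local estimates over the covering $\{B_{2r_0}(p_i)\}$ and subtracts $n(n-1)\vol(M)$ to conclude $\int_M\Sc\le C(n,D)$.

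The main obstacle is the uniformity of the local estimate. At concentration points where $\Sc$ is very large positive, the Taylor expansion above is valid only at scales $r\ll\Sc(p)^{-1/2}$, so no uniform $r_0(n)$ can be obtained from pointwise analytic arguments alone. Petrunin's resolution is to replace the analytic expansion by a synthetic comparison: Toponogov's theorem gives a non-asymptotic comparison of geodesic-triangle angle excesses in $B_r(p)$ with those of their hyperbolic comparison triangles, and an appropriate averaging of these excesses over pairs (or triples) of points in $B_r(p)$ reproduces the scalar curvature integral while remaining bounded independently of how concentrated $\Sc$ becomes. This passage from asymptotic (Riemannian) to synthetic (Alexandrov) comparison is the delicate heart of the argument and the step that demands genuinely Alexandrov-geometric techniques, rather than a direct second-variation computation.
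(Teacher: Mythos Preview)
The present paper does not itself prove Theorem~\ref{thm:pet}; it is quoted from \cite{Pet:int}. What the paper does contain is a detailed outline of Petrunin's method, since the proof of Theorem~\ref{thm:main} is explicitly modelled on it (see \S\ref{sec:boc}, \S\ref{sec:res}, \S\ref{sec:prf}, and the citations to \cite[2.2, 3.5, 3.6, 4.5]{Pet:int} throughout). Your proposal bears essentially no resemblance to that method.

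Petrunin's actual argument proceeds by reverse induction on the codimension of a strained surface in an Alexandrov space. On such a surface one uses the Gauss formula~\eqref{eq:gau} to express $\Sc_M$ in terms of $\Sc_L$, $\Ric_M(u,u)$, and the extrinsic term $G$ for a level hypersurface $L$ of a strainer distance function; the integral of $\Ric_M(u,u)$ is then handled by the Bochner-type identity~\eqref{eq:boc}. The coarea formula assembles these into an estimate on a strained annulus, feeding the inductive hypothesis on $L$. Where the strainer cannot be extended one rescales and passes to a Gromov--Hausdorff limit (the CBB analogue of Theorem~\ref{thm:res}), running a second, nested induction on the $\delta$-packing number of spaces of directions. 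Volume asymptotics of small balls, angle-excess formulas, and Toponogov-style averaging over triples of points do not appear in this scheme.

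Your Step~2 is, as you say yourself, not a proof: the Taylor expansion of $\vol B_r(q)$ breaks down precisely where $\Sc$ concentrates, and the proposed repair---replacing the expansion by an average of comparison-triangle excesses that ``reproduces the scalar curvature integral''---is asserted but not carried out. More importantly, it is not what Petrunin does, so attributing that mechanism to \cite{Pet:int} is inaccurate. If an averaging-of-excesses argument can be made rigorous it would constitute a genuinely different (and attractively elementary) proof; but as it stands the proposal is a heuristic for a possible alternative route, with its central step missing, rather than either a proof or a summary of the existing one.
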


Our proof parallels (and owes much to) Petrunin's one: both rely on the convergence theory of Riemannian manifolds with one-sided curvature bounds, especially its rescaling techniques.
The parallel structure of the proofs comes from that of the limit spaces, i.e., Alexandrov spaces and GCBA spaces (see Section \ref{sec:ad} below).

In Theorem \ref{thm:main}, neither the injectivity radius bound nor the volume bound can be dropped.
In either case, one can construct a surface of genus $g$ satisfying any other bound, but the Gauss-Bonnet theorem tells us that its total scalar curvature is $4\pi(2-2g)$, which goes to $-\infty$ as $g\to\infty$.
For example, a connected sum of flat tori getting smaller and smaller admits upper bounds on curvature and volume, but no lower bound on injectivity radius.

The lower bound on total scalar curvature, together with the upper bounds on sectional curvature and volume, immediately yields

\begin{cor}
Under the same assumptions as in Theorem \ref{thm:main}, there is a uniform $L^1$ bound on the curvature tensor of $M$ in terms of $n$, $r$, and $V$.
\end{cor}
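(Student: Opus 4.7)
The plan is to convert the $L^1$ lower bound on scalar curvature from Theorem~\ref{thm:main} into an $L^1$ upper bound on the full curvature tensor $R$ via a pointwise algebraic inequality that exploits the assumption $K \le 1$. Here $|R|$ denotes the pointwise norm of the $(0,4)$ curvature tensor induced by the metric.

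The key trick I would use is to pass from $R$ to the deficit tensor $R_0 := \Sigma - R$, where $\Sigma$ denotes the algebraic curvature tensor of constant sectional curvature $1$ on the tangent space at each point. By hypothesis every sectional curvature of $R_0$ is non-negative, so at each point $R_0$ lies in the closed convex cone $\mathcal{C}_n^+$ of algebraic curvature tensors on $\mathbb{R}^n$ with non-negative sectional curvature, sitting inside the finite-dimensional vector space $\mathcal{C}_n$ of all such tensors. Given a purely algebraic estimate of the form
\[
|R_0| \le c_n\,\Sc(R_0) \quad \text{for all } R_0 \in \mathcal{C}_n^+,
\]
with $c_n$ depending only on $n$, one obtains pointwise on $M$ that $|R| \le |\Sigma| + |R_0| \le c_n' + c_n(n(n-1) - \Sc)$ with $c_n' := |\Sigma|$. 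Integrating against $d\vol$ and using $\vol(M) \le V$ together with Theorem~\ref{thm:main} then yields
\[
\int_M |R|\,d\vol \le (c_n' + c_n n(n-1))V - c_n\,C(n,r,V),
\]
which is the desired uniform bound.

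To justify the algebraic inequality, I would first observe that $\Sc$ is a non-negative linear functional on $\mathcal{C}_n^+$ which is strictly positive on $\mathcal{C}_n^+ \setminus \{0\}$. Indeed, expanding $\Sc(R_0)$ in any orthonormal basis $\{e_i\}$ writes it as a sum of non-negative sectional curvatures $K(R_0, e_i, e_j)$, so $\Sc(R_0) = 0$ forces all such sectional curvatures to vanish; varying the basis shows the sectional curvature function of $R_0$ is identically zero, and by the standard polarization of the sectional curvature function this forces $R_0 = 0$. Since $|\cdot|$ and $\Sc$ are both positively homogeneous of degree $1$, the quotient $|R_0|/\Sc(R_0)$ is a well-defined continuous positive function on the compact slice $\mathcal{C}_n^+ \cap \{|R_0| = 1\}$, hence bounded above by a dimensional constant $c_n$. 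Scaling recovers the desired inequality on the whole cone.

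I do not expect a serious obstacle in this plan: the compactness argument is the only nontrivial ingredient and is standard, while the pointwise comparison follows from the triangle inequality and the final integration uses Theorem~\ref{thm:main} transparently.
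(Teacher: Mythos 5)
Your proposal is correct and fills in precisely the argument the paper leaves implicit (the paper merely states that the corollary ``immediately'' follows from Theorem~\ref{thm:main} together with the upper sectional curvature bound). The two ingredients you supply are exactly the expected ones: the pointwise comparison $R = \Sigma - R_0$ with $R_0$ of nonnegative sectional curvature, and the compactness argument on the unit slice of the convex cone $\mathcal{C}_n^+$ to obtain the dimensional estimate $|R_0| \le c_n \Sc(R_0)$, which then integrates against Theorem~\ref{thm:main} and the volume bound. Your justification that $\Sc(R_0)=0$ forces $R_0=0$ on the cone (via vanishing of all sectional curvatures and polarization for algebraic curvature tensors) is the one genuinely nontrivial step, and you handle it correctly.
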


Recently, Lebedeva-Petrunin \cite{LP:curv} obtained a convergence theorem for the curvature tensors of Riemannian manifolds with sectional curvature bounded below and without collapsing.
They regarded curvature tensors as measures and showed the weak convergence under the Gromov-Hausdorff convergence.
It is natural to expect that the same holds true for the case of upper curvature bound (where no collapsing occurs).

\begin{thm}[Lebedeva-Petrunin \cite{LP:curv}]
Suppose a sequence $M_j$ of $n$-dimensional Riemannian manifolds with sectional curvature $\ge-1$ converges to an Alexandrov space $X$ of dimension $n$.
Then the curvature tensor of $M_j$ weakly converges to a ``measure-valued tensor'' on $X$.
\end{thm}

\begin{conj}
Suppose a sequence $M_j$ of $n$-dimensional Riemannian manifolds with sectional curvature $\le1$ and injectivity radius $\ge r>0$ converges to a GCBA space $X$.
Then the curvature tensor of $M_j$ weakly converges to a ``measure-valued tensor'' on $X$.
\end{conj}

In particular, the total scalar curvature of $M_j$ will converge, if $X$ is compact.
Furthermore, if $X$ is Riemannian, the limit value will be the total scalar curvature of $X$.
In the case of lower curvature bound, the latter is an unpublished result of Perelman (see \cite[App.\,B]{Pet:poly}).
The $2$-dimensional case follows from a classical result of Alexandrov-Zalgaller \cite[Ch.\,VII \S4.13]{AZ}.

More recent results in the nonsmooth setting of Alexandrov spaces can be found in \cite{Li:c2}.
See \cite{A:conj} for the conjectural behavior of total scalar curvature in the case of collapse.
See also \cite{Nab} for related conjectures in the Ricci lower bound case.

\subsection{Analogies and differences}\label{sec:ad}

The purpose of this section is to elaborate on the analogies and differences between the proofs of Theorems \ref{thm:main} and \ref{thm:pet}.

In either case, the most fundamental fact is that the family of manifolds under consideration is precompact in the Gromov-Hausdorff topology.
The limit spaces are metric spaces with synthetic sectional curvature bounds.
In the case of lower curvature bound, it is an Alexandrov space (= CBB space) in the sense of Burago-Gromov-Perelman \cite{BGP}.
In the case of upper curvature bound, it is a GCBA space (= geodesically complete CBA space) in the sense of Lytchak-Nagano \cite{LN:geo}.
These spaces have different properties in general, but to some extent they have parallel structures.
For example, shortest paths in an Alexandrov space are neither unique nor extendable, but do not branch, whereas shortest paths in a GCBA space are unique and extendable (within injectivity radius), but do branch.
This duality enables us to prove Theorem \ref{thm:main} in a parallel way to Theorem \ref{thm:pet}.
Although there is no collapse in the CBA case, we still need a rescaling technique similar to the one used in the CBB case, and also different technical difficulties arise.
There are three main differences in the proof that the author is aware of.

First of all, the logical structure of the proof is exactly the same for CBB and CBA.
The proof is by induction on dimension, actually not for a manifold itself, but for a special submanifold, called strained surface (technically, we need a slightly modified one, called corner surface as in \cite[3.1]{Pet:int}).

The base case is $2$-dimensional, where by the Gauss-Bonnet theorem the total scalar curvature is essentially equal to the Euler characteristic.
In the CBB case, there is nothing to prove since the Euler characteristic of a surface is at most $2\pi$.
In the CBA case, we have to bound the Euler characteristic from below, which is possible because of the absence of collapse, by a stability result of Lytchak-Nagano \cite[13.1]{LN:geo}.
This is the first difference.
(However, such a lower bound also exists in the CBB case, despite the lack of stability due to collapse.)

The proof of the higher-dimensional case has two important steps.
Both involve technical differences between CBB and CBA geometries.
The convergence theory is used in Step (1) and Step (2) only concerns a fixed manifold.

\begin{enumerate}
\item Bounding the bad behavior of geodesics (by precompactness).
\item Calculating the curvature integral (by induction).
\end{enumerate}

As for Step (1), first recall that the CBB/CBA condition is nothing but the contraction/divergence property of geodesics.
In a CBB Riemannian manifold, if geodesics emanating from a point are more contracting than those in a metric cone, then the volume of a metric sphere centered at that point, normalized by its radius, will strictly decrease.
This argument extends directly to general Alexandrov spaces, as in \cite[3.7]{Pet:int}.
The opposite statement seems to be true for a CBA Riemannian manifold, that is, the more diverging geodesics emanating from a point, the bigger the volume of a normalized sphere.
This is indeed true, but the proof requires a small trick using the geodesic flow, which cannot be extended to general GCBA spaces (see Remark \ref{rem:res} and Example \ref{ex:res}).
This is the second difference.

Based on the above observation, one can prove a CBA analogue of the rescaling theorem of Petrunin \cite[3.6]{Pet:int} (Theorem \ref{thm:res}).
This rescaling theorem, together with the precompactness, allows us to cover our manifold by a uniformly finite number of ``good'' metric annuli on which geodesics from each center behave like those in a metric cone.
This is where the uniform boundedness comes from.
A similar good annulus covering was used in the more recent paper by Li \cite[3.2]{Li:c2}.
(Nevertheless, in the actual proof, as in the CBB case, we will not take such a good annulus covering, but apply the rescaling theorem directly to an argument by contradiction.)

In Step (2), we calculate the curvature integral of such a good annulus inductively from those of the concentric metric spheres (Proposition \ref{prop:ind}).
We make use of the Bochner-type formula \eqref{eq:boc} developed by Petrunin to calculate the curvature integral in the normal direction to the sphere.
This is the same for CBB and CBA.
The difference is rather in the tangent direction, especially the extrinsic curvature, which is caused by the difference between the semiconcavity and (semi)convexity of distance functions in CBB and CBA manifolds.
Compare Claim \ref{clm:conv} with \cite[4.5(ii)c]{Pet:int}.
Moreover, in Claim \ref{clm:G}, we prove that for a small metric sphere $L$ in our manifold $M$,
\[K_L^+\le K_M^++G,\]
where $K_L^+$ and $K_M^+$ denote the maximal sectional curvatures of $L$ and $M$, respectively, and $G$ denotes the Gauss-type extrinsic curvature of $L$ in $M$ (see Section \ref{sec:boc}).
Compare this inequality with the Trivial inequality (ii) in \cite[4.5]{Pet:int} for the opposite estimate in the CBB case, where the mean curvature $H$ appears instead of $G$.
It seems impossible to bound $G$ itself from above, but the Bochner formula mentioned above allows us to bound the integral of $G$ in terms of the curvature integral of the ambient space $M$.
The latter is bounded above by the assumption.
This is the third difference.

\begin{rem}\label{rem:ad}
In CBB geometry, the fact that a convex hypersurface inherits the lower curvature bound of the ambient manifold is frequently used for induction, especially in the noncollapsing setting (see for instance, \cite[App.\,B]{Pet:poly}, \cite{LP:curv}, and \cite{K:reg}).
This is not the case for a CBA manifold, but the last paragraph suggests that it does not matter here.
This might remind the reader of the difference between the proofs of \cite[1.4]{K:reg} and \cite[1.3]{LN:top}.
\end{rem}

\step{Organization}
In Section \ref{sec:pre}, we recall a variant of the Bochner formula developed by Petrunin \cite{Pet:int} and the notions of a GCBA space and a strainer introduced by Lytchak-Nagano \cite{LN:geo}.
Since our definition of a strainer is slightly different from the original one, we need to prove some basic lemmas on it.
In Section \ref{sec:res}, we prove a key rescaling theorem \ref{thm:res}, which is a CBA analogue of \cite[3.6]{Pet:int}.
Section \ref{sec:prf} is devoted to the proof of Theorem \ref{thm:main}, by induction on the dimension of a strained surface.
To exhaust a strained surface by lower-dimensional ones, we carry out another induction based on the rescaling theorem.
Finally in Section \ref{sec:app}, we show some auxiliary results used in the previous section.

\step{Acknowledgments}
I would like to thank Professors Anton Petrunin and Alexander Lytchak for their interest in my work and helpful comments.
Both gave me the same comments regarding Remark \ref{rem:ad}, which motivated me to write Section \ref{sec:ad}.
Alexander Lytchak also provided a simplified proof of Proposition \ref{prop:vol}.
I am also grateful to my former colleague Ya Gao for discussion and encouragement.

\section{Preliminaries}\label{sec:pre}

\subsection{Notation}\label{sec:not}

The distance between $x$ and $y$ is denoted by $|xy|$.
The open and closed $r$-balls around $p$ and its boundary are denoted by $B(p,r)$, $\bar B(p,r)$, and $\partial B(p,r)$, respectively.
The open metric annulus $B(p,r)\setminus\bar B(p,s)$ is denoted by $A(p;s,r)$.
We allow $s=0$, in which case $A(p;0,r)=B(p,r)\setminus\{p\}$.
For a metric space $X$ and $\lambda>0$, we denote by $\lambda X$ the rescaled space with metric multiplied by $\lambda$.
The sectional, Ricci, and scalar curvatures are denoted by $K$, $\Ric$, and $\Sc$, respectively, with subscript indicating a manifold if necessary.
All integrals are with respect to the Riemannian volume measure, denoted by $\vol$, which is usually omitted.
We will use the notation $C$ to denote various (usually positive) constants, which depend only on indicated parameters.

\subsection{Bochner formula}\label{sec:boc}

We recall the Bochner-type formula developed in \cite{Pet:int}.

We will use the same notation as in \cite[\S2]{Pet:int}.
Let $M$ be an $(m+1)$-dimensional Riemannian manifold.
Assume that a smooth function $f:M\to\mathbb R$ has no critical values on $[a,b]\subset\mathbb R$ and that its level sets $L_t=f^{-1}(t)$ are compact for all $t\in[a,b]$.
We denote
\begin{itemize}
\item by $u=\nabla f/|\nabla f|$ the unit normal field to $L=L_t$;
\item by $\kappa_i(x)$ ($1\le i\le m$) the principal curvatures of $L$ at $x\in L$, that is, the eigenvalues of the shape operator $S_x:T_xL\to T_xL$ defined by $S_x(v)=\nabla_vu$ for $v\in T_xL$;
\item by $H(x)=\sum_{i=1}^m\kappa_i(x)$ the mean curvature of $L$ at $x$;
\item by $G(x)=\sum_{i\neq j}\kappa_i(x)\kappa_j(x)$ the Gauss-type curvature of $L$ at $x$, defined as the external term in the Gauss formula for the scalar curvature of $L$ at $x$, i.e.,
\begin{equation}\label{eq:gau}
\Sc_{L}=\Sc_M-2\Ric_M(u,u)+G.
\end{equation}
\end{itemize}

The following is a special form of the integral Bochner formula, which gives us the integral of the normal Ricci curvature in the above Gauss formula in terms of the integral of the extrinsic curvatures.

\begin{lem}[{\cite[2.2]{Pet:int}}]
In the situation above, we have
\begin{equation}\label{eq:boc}
\int_{f^{-1}[a,b]}\Ric_M(u,u)=\int_{f^{-1}[a,b]}G+\int_{L_a}H-\int_{L_b}H.
\end{equation}
\end{lem}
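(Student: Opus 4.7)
My plan is to reduce the claim to two applications of the divergence theorem to vector fields built from $u$ itself. The crucial preliminary is the pointwise identity
\[\di(\nabla_X X) = X(\di X) + \operatorname{tr}\bigl((\nabla X)^2\bigr) + \Ric_M(X,X),\]
valid for any smooth vector field $X$; I would derive it by writing $\nabla_{e_i}\nabla_X X = \nabla_X\nabla_{e_i}X + \nabla_{[e_i,X]}X + R(e_i,X)X$ in a frame $\{e_i\}$ parallel at the point, taking the inner product with $e_i$, and summing.

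Applied to $X = u$, this identity simplifies considerably. Since $|u|=1$, the vector $\nabla_u u$ lies in $u^\perp$; and in a frame $\{u, e_1,\dots,e_m\}$ with $e_i$ tangent to $L$, the matrix of $\nabla u$ is block triangular with diagonal blocks $0$ and $S$. Hence $\di u = H$ and $\operatorname{tr}((\nabla u)^2) = \operatorname{tr}(S^2) = \|S\|^2 = \sum_i \kappa_i^2$, so the identity becomes
\[\di(\nabla_u u) = u(H) + \|S\|^2 + \Ric_M(u,u).\]
Integrating over $\Omega := f^{-1}[a,b]$ and invoking the divergence theorem on the left-hand side, the boundary integral $\int_{\partial\Omega}\langle\nabla_u u,\nu\rangle$ \emph{vanishes}, since $\nabla_u u\perp u$ whereas $\nu = \pm u$ on the components $L_a, L_b$ of $\partial\Omega$.

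To finish, I would convert $\int_\Omega u(H)$ into boundary terms using $\di(Hu) = H\di u + u(H) = H^2 + u(H)$ together with a second application of the divergence theorem: since $f$ is increasing along $u$, the outward unit normal is $u$ on $L_b$ and $-u$ on $L_a$, giving $\int_\Omega u(H) = \int_{L_b}H - \int_{L_a}H - \int_\Omega H^2$. Substituting into the identity from the previous paragraph and using the elementary algebraic fact $G = H^2 - \|S\|^2$ produces the claim. The key conceptual step is choosing the correct vector fields ($u$ and $Hu$) on which to integrate by parts; once this is done the calculation is largely mechanical, with the only delicate bookkeeping being the opposite signs of $\nu$ on the two boundary components.
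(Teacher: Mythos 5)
Your proof is correct. Note, however, that the paper does not give its own proof of this lemma---it is quoted directly from Petrunin \cite[2.2]{Pet:int}---so there is nothing in the present paper to compare against. Your derivation, via the pointwise identity $\di(\nabla_u u)=u(\di u)+\operatorname{tr}\bigl((\nabla u)^2\bigr)+\Ric_M(u,u)$ applied to the unit normal field, the vanishing of the boundary term $\int_{\partial\Omega}\langle\nabla_u u,\nu\rangle$ because $\nabla_u u\perp u$ while $\nu=\pm u$ on $L_a\cup L_b$, the second divergence identity $\di(Hu)=H^2+u(H)$ with the sign bookkeeping $\nu=u$ on $L_b$ and $\nu=-u$ on $L_a$, and the algebraic identity $G=H^2-\|S\|^2$, is a complete and standard argument. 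The block-triangular structure of $\nabla u$ in the adapted frame is stated correctly (the $u$-row vanishes, the $u$-column contributes off-diagonally, so $\operatorname{tr}(\nabla u)=H$ and $\operatorname{tr}((\nabla u)^2)=\|S\|^2$), and the use of a frame parallel at a point to obtain the pointwise Bochner identity is sound. No gaps.
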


\subsection{GCBA spaces}\label{sec:gcba}

We refer to \cite{LN:geo} and \cite{LN:top} for the general theory of GCBA spaces.
Here we recall the basic definitions and properties, focusing on the Riemannian case and  their limit spaces.

A \textit{CBA($\kappa$) space} is a metric space with curvature bounded above by $\kappa$ locally in the sense of triangle comparison.
More precisely, any small geodesic triangle is not thicker than the comparison triangle with the same sidelengths in the model plane of constant curvature $\kappa$.
A \textit{CAT($\kappa$) space} is a metric space where the triangle comparison holds globally, as long as the perimeter of a triangle is less than twice the diameter of the model $\kappa$-plane.
For a Riemannian manifold, CBA($\kappa$) is equivalent to the sectional curvature being bounded above by $\kappa$, and CAT($\kappa$) is equivalent to, in addition to CBA($\kappa$), the injectivity radius being bounded below by that of the model $\kappa$-plane.

A CBA space is \textit{locally geodesically complete} if any geodesic (= local shortest path) is locally extendable.
If the CBA space is complete, then local geodesic completeness is equivalent to global geodesic completeness, that is, any geodesic is infinitely extendable (a kind of the Hopf-Rinow theorem).
A \textit{GCBA space} is a separable, locally compact, locally geodesically complete CBA space.

For a point $p$ in a GCBA space, the \textit{space of directions} and \textit{tangent cone} at $p$ will be denoted by $\Sigma_p$ and $T_p$, respectively.
That is, $\Sigma_p$ is the set of shortest paths emanating from $p$ equipped with the angle metric, and $T_p$ is the Euclidean cone over $\Sigma_p$, which is isometric to the blow-up of the space at $p$.
Note that $\Sigma_p$ is a compact geodesically complete CAT($1$) space, and $T_p$ is a locally compact geodesically complete CAT($0$) space.
For a Riemannian manifold, they are nothing but the unit tangent sphere and the tangent space.

The \textit{dimension} of a GCBA space is defined by its Hausdorff dimension, which coincides with the topological dimension.
The local dimension at each point (= the dimension of a small neighborhood or the tangent cone) is finite, but not constant in general.
If it is constant, the GCBA space is called \textit{pure-dimensional}.

We will now focus on the Riemannian case.
Let $\mathcal M(n,r,V)$ denote the family of $n$-dimensional (complete) Riemannian manifolds with sectional curvature $\le 1$, injectivity radius $\ge r>0$, and volume $\le V$, as in Theorem \ref{thm:main}.
Throughout the paper, we assume that $r$ is small enough compared to $\pi$, the injectivity radius of the unit sphere $\mathbb S^n$.

A metric space is totally bounded if for any $\varepsilon>0$ it contains a finite $\varepsilon$-net, that is, a subset whose $\varepsilon$-neighborhood is the whole space.
A family $\mathcal M$ of metric spaces is \textit{uniformly totally bounded} if for any $\varepsilon>0$ every element of $\mathcal M$ contains a uniformly finite $\varepsilon$-net.
For any $M\in\mathcal M(n,r,V)$, the G\"unther volume comparison (cf.\ \cite[6.1]{N:vol}) says that the volume of any $s$-ball in $M$ ($s\le r$) is not less than the volume of the $s$-ball in $\mathbb S^n$.
Together with the upper volume bound $V$, this implies that $\mathcal M(n,r,V)$ is uniformly totally bounded.
In particular, every element of $\mathcal M(n,r,V)$ is compact and has uniformly bounded diameter.

A metric space is $C$-doubling if any metric ball can be covered by at most $C$ balls of half radius.
A family $\mathcal M$ of metric spaces is \textit{uniformly $C$-doubling} if  every element of $\mathcal M$ is $C$-doubling for fixed $C$.
For any $M\in\mathcal M(n,r,V)$, the relative volume comparison \`a la Bishop-Gromov (cf.\ \cite[6.3]{N:vol}) shows that the volume of any $s$-ball in $M$ ($s\le r$) is bounded above by a constant multiple of  the volume of the $s$-ball in $\mathbb S^n$, where the constant depending only on $n$, $r$, and $V$.
Together with the absolute volume comparison of G\"unther mentioned above, this implies that $\mathcal M(n,r,V)$ is uniformly $C$-doubling, where $C$ is a constant depending only on $n$, $r$, and $V$ (cf.\ \cite[\S5.2]{LN:geo}).

The uniform total boundedness implies that $\mathcal M(n,r,V)$ is \textit{precompact} in the Gromov-Hausdorff topology, meaning that any sequence in $\mathcal M(n,r,V)$ has a convergent subsequence.
The limit is a compact GCBA space of pure dimension $n$ and with the same geometric bounds (for curvature, injectivity radius, and volume \cite{N:vol}).
Moreover, by the stability theorem of Lytchak-Nagano \cite[1.3]{LN:top}, it turns out that the limit is a topological manifold homeomorphic to the elements of the sequence with sufficiently large indices.

In this paper we basically deal with such GCBA spaces arising as Gromov-Hausdorff limits of elements of $\mathcal M(n,r,V)$.
We denote by $\bar{\mathcal M}(n,r,V)$ the Gromov-Hausdorff closure of $\mathcal M(n,r,V)$.

\subsection{Strainers}\label{sec:str}

We recall the notion of a strainer introduced in \cite{LN:geo} (originally in the CBB setting \cite{BGP}).
More precisely, we introduce a slightly stronger version and prove some lemmas on it.
Although we restrict our attention to GCBA spaces in $\bar{\mathcal M}(n,r,V)$, the contents of this section are applicable to general GCBA spaces.

Let $N$ be a GCBA space in $\bar{\mathcal M}(n,r,V)$.
As mentioned earlier, we assume $r\ll\pi$.
In what follows we basically work in a closed ball of radius $r/10$ in $N$, which plays the same role as a \textit{tiny ball} in \cite{LN:geo}.
By triangle comparison, such a ball is convex, and shortest paths are unique and extendable in it.

A strainer introduced in \cite{LN:geo} controls the branching of shortest paths in a GCBA space, so it is an infinitesimal notion in nature.
However, our main concern is a Riemannian manifold, where geodesics never branch (but may diverge rapidly).
For this reason, we need to define a strainer with the notion of \textit{straining radius} as in \cite[\S7.5]{LN:geo}, which controls the local behavior of geodesics (however, the original infinitesimal definition has other advantages; for instance, see Remark \ref{rem:vol3}).
Also, we will define it using comparison angles and non-strict inequalities rather than actual angles and strict inequalities, so that our strainers are stable under the Gromov-Hausdorff convergence (see Remark \ref{rem:str}(1) below).

Let $k$ be a positive integer and $\delta$ and $\varepsilon$ positive numbers.
We always assume $\delta\ll1/n$ and $\varepsilon<r/10$ (see also Convention \ref{conv:d}).
Denote by $\tilde\angle$ comparison angle.

\begin{dfn}\label{dfn:str}
Let $p$ be a point in a GCBA space $N\in\bar{\mathcal M}(n,r,V)$.
A collection $\{a_i\}_{i=1}^k$ of points in $\bar B(p,r/10)\setminus B(p,\varepsilon)$ is called a \textit{$(k,\delta,\varepsilon)$-strainer} at $p$ if there exists another collection $\{b_i\}_{i=1}^k$ of points in $\bar B(p,r/10)\setminus B(p,\varepsilon)$ (called an \textit{opposite strainer}) such that the following hold for any distinct $x,y\in B(p,\varepsilon)$:
\begin{gather}
\tilde\angle a_ixy+\tilde\angle b_ixy\le\pi+\delta,\label{eq:str1}\\
\tilde\angle a_ixa_j,\tilde\angle a_ixb_j,\tilde\angle b_ixb_j\le\pi/2+\delta,\label{eq:str2}
\end{gather}
where $1\le i\neq j\le k$.
The value $\varepsilon$ is called the \textit{straining radius} of this strainer.
The map $F=(|a_1\cdot|,\dots,|a_k\cdot|)$ is called a \textit{$(k,\delta,\varepsilon)$-strainer map} at $p$.

For $U\subset N$, we say that $F:N\to\mathbb R^k$ is a \textit{$(k,\delta,\varepsilon)$-strainer map} on $U$ if it is a $(k,\delta,\varepsilon)$-strainer map at any $p\in U$ (note that the opposite strainer may be different for each $p$).
For convenience, we regard a constant map $F:N\to\mathbb R^0$ as a $(0,\delta,\varepsilon)$-strainer map for any $\delta$ and $\varepsilon$.
\end{dfn}

The inequality \eqref{eq:str1} means that any extension of the shortest path from $a_i$ to $x$ goes in almost the same direction as the direction to $b_i$.
In other words, shortest paths in that direction diverge very little.
The inequality \eqref{eq:str2} means that these almost unique opposite pairs $(a_i,b_i)$ are almost orthogonal.
The point here is that the opposite strainer is the same for every point in the ball of straining radius.

The above notions of a strainer and straining radius are obviously stronger than the ones in \cite[7.2, 7.10]{LN:geo}, in that ours are defined by comparison angles.
Moreover, our strainer has the following two properties (especially the first one).

\begin{rem}\label{rem:str}
Suppose $N_j\to N$ is a Gromov-Hausdorff convergent sequence in $\bar{\mathcal M}(n,r,V)$.
Suppose $p_j,a_i^j\in N_j$ converge to $p,a_i\in N$ ($1\le i\le k$), respectively.
\begin{enumerate}
\item(Stability) If $\{a_i^j\}_{i=1}^k$ is a $(k,\delta,\varepsilon)$-strainers at $p_j$, then so is $\{a_i\}_{i=1}^k$ at $p$.
\item(Liftability) If $\{a_i\}_{i=1}^k$ is a $(k,\delta,\varepsilon)$-strainer at $p$, then $\{a_i^j\}_{i=1}^k$ is a $(k,2\delta,\varepsilon/2)$-strainer at $p_j$ for sufficiently large $j$.
\end{enumerate}
\end{rem}

The first one is trivial from the definition.
The second one follows from geodesic completeness and angle monotonicity.
Indeed, these imply that one may assume $|xy|>\varepsilon/2$ in the inequality \eqref{eq:str1} for $x\in B(p,\varepsilon/2)$.
Thus the inequality \eqref{eq:str1} can be lifted to the converging sequence as well as the inequality \eqref{eq:str2}.

The rest of this section contains a few basic lemmas on our strainer.
In general, a strainer arises from the fact that the tangent cone of a GCBA space is a metric cone.
The following lemma is a quantitative version of \cite[7.3]{LN:geo}.

\begin{lem}\label{lem:str1}
Let $p\in N$ in $\bar{\mathcal M}(n,r,V)$.
Then for any $\delta>0$, there exists $\rho>0$ such that $p$ is a $(1,\delta,10^{-1}\delta|px|)$-strainer at any $x\in B(p,\rho)\setminus\{p\}$.
\end{lem}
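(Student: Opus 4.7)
The plan is to take $b$ as the continuation of the geodesic $[px]$ past $x$ (available by local geodesic completeness) with $|xb| = |px|$, so that $|pb| = 2|px|$ and $b$ lies in the required annulus $\bar B(x, r/10) \setminus B(x, 10^{-1}\delta|px|)$ for $\rho$ small. The strainer inequality is then verified by a blow-up argument at $p$. Supposing the lemma fails for some $\delta > 0$, extract a sequence $x_j \to p$ (with $x_j \ne p$) and distinct $y_j, z_j \in B(x_j, 10^{-1}\delta|px_j|)$ such that, with $b_j$ the extension at $x_j$, $\tilde\angle p y_j z_j + \tilde\angle b_j y_j z_j > \pi + \delta$. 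Rescale by $\lambda_j := 1/|px_j|$. Up to a subsequence, $(\lambda_j N, p) \to (T_p, o)$ in the pointed Gromov-Hausdorff sense; $x_j \to \xi$ with $|o\xi| = 1$, $b_j \to \eta$ on the continuation of $[o\xi]$ with $|o\eta| = 2$, and $y_j, z_j$ subsequentially converge to $y_\infty, z_\infty \in \bar B(\xi, 10^{-1}\delta) \subset T_p$. Provided $y_\infty \ne z_\infty$, the scale-invariance and continuity of comparison angles give $\tilde\angle_{y_\infty}(o, z_\infty) + \tilde\angle_{y_\infty}(\eta, z_\infty) \ge \pi + \delta$ in $T_p$.

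This contradicts the cone structure of $T_p$. Since $\xi$ lies at the midpoint of $[o\eta]$, the directions from $\xi$ to $o$ and to $\eta$ are antipodal, so $\tilde\angle_\xi(o, z) + \tilde\angle_\xi(\eta, z) = \pi$ exactly for any $z$. For $y, z$ perturbed within $10^{-1}\delta$ of $\xi$, a direct expansion in the cone metric---using the flatness of 2D sectors through the apex $o$, which makes $\tilde\angle_y(o, \cdot)$ an actual Euclidean angle, together with the cone distance formula $|\eta y|^2 = s_y^2 + 4 - 4 s_y\cos\phi_y$---yields $\tilde\angle_y(o, z) + \tilde\angle_y(\eta, z) \le \pi + C(|y\xi| + |z\xi|)$ for an explicit absolute constant $C < 5$, hence strictly less than $\pi + \delta$ when $|y_\infty\xi|, |z_\infty\xi| \le 10^{-1}\delta$. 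This contradicts the preceding estimate. The degenerate case $y_\infty = z_\infty$ (where $|y_j z_j|_{\lambda_j N} \to 0$) is handled by a further rescaling by $1/|y_j z_j| \to \infty$ around $y_j$: the resulting iterated tangent cone at $y_\infty$ of $T_p$ contains antipodal directions coming from $o$ and $\eta$, along which the strainer inequality becomes immediate.

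The main obstacle is the quantitative cone-level bound: although the strainer condition is exact at $\xi$, establishing a linear-in-distance bound on the excess for $y, z$ near $\xi$ requires a careful law-of-cosines expansion in the cone metric, and some attention to the fact that only triangles with a vertex at the apex $o$ lie in 2D Euclidean sectors. The degenerate case also requires some care via iterated tangent cones, though the splitting of such an iterated tangent cone along $[o\eta]$ is a standard property of GCBA spaces.
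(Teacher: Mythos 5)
Your overall strategy --- blow up at scale $|px_j|$ to reach the tangent cone $T_p$, and argue by contradiction from the fact that the vertex $o$ is a strainer in the cone --- is exactly the approach the paper takes. The difference is that the paper abbreviates the argument to two lines by invoking Remark \ref{rem:str}(2) (liftability of strainers), treating both the cone computation and the transfer from $T_p$ back to $N$ as established facts, whereas you attempt to verify these by hand. This surfaces exactly the technicalities that the liftability remark encapsulates, and your treatment of one of them has a genuine gap.

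The gap is the degenerate case $y_\infty = z_\infty$. In your second blow-up by $1/|y_jz_j|$ around $y_j$, the reference points $p$ and $b_j$ recede to infinity, so the quantities $\tilde\angle p\, y_j z_j$ and $\tilde\angle b_j\, y_j z_j$ do not converge to ordinary comparison angles in the iterated limit; one would have to make sense of angles at infinity and then prove that the two are antipodal, and the limit of $(\lambda_j\mu_j N_j, y_j)$ for an arbitrary sequence $\mu_j\to\infty$ need not even coincide with an iterated tangent cone of $T_p$. None of this is ``immediate.'' Moreover, even the non-degenerate estimate is more delicate than a law-of-cosines expansion suggests: to get the linear bound $\pi + C(|y\xi|+|z\xi|)$ with $C<5$ one must exploit that the triangle-inequality defect $|oy|+|y\eta|-|o\eta|$ is quadratic (not linear) in $|y\xi|$, and one must be careful about comparison versus actual angles since only triangles with a vertex at $o$ sit in flat sectors. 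A cleaner route that removes the case split entirely is to estimate \emph{actual} angles at $y_j$: by CBA, $\tilde\angle p\,y_jz_j + \tilde\angle b_j\,y_jz_j \le \angle_{y_j}(p,z_j) + \angle_{y_j}(b_j,z_j)$, which is insensitive to $|y_jz_j|\to 0$; one then shows $\angle_{y_j}(p,b_j)\to\pi$, and uses that in a geodesically complete CAT(1) space of directions $\Sigma_{y_j}$, two points $\xi,\zeta$ with $|\xi\zeta|\ge\pi-\epsilon$ satisfy $|\xi\upsilon|+|\zeta\upsilon|\le\pi+\epsilon$ for every $\upsilon$ (unique antipodes plus geodesic extension). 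That is, in effect, the content of the paper's liftability Remark \ref{rem:str}(2).
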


\begin{proof}
The proof is essentially the same as \cite[7.3]{LN:geo}.
Since the tangent cone $T_p$ at $p$ is a metric cone, it is easy to see that the vertex $o$ is a $(1,2^{-1}\delta,5^{-1}\delta|ov|)$-strainer at $v\in T_p\setminus\{o\}$ for any $\delta>0$ (note that we regard the injectivity radius of $T_p$ as $\infty$ since it is CAT($0$), which is different from our setting above, but easy to modify).
Indeed, by rescaling, we may assume $|ov|=1$.
Then a point $\bar v$ sufficiently far from $o$ on the ray emanating from $o$ in the direction $v$ will be an opposite strainer. 
This is because for any $w\in B(v,5^{-1}\delta)$ the shortest path from $w$ to $\bar v$ makes an angle almost $5^{-1}\delta$ with the ray emanating from $o$ in the direction $w$.
Now suppose the statement does not hold.
Then there exists a sequence $x_i\in N$ converging to $p$ at which $p$ is not a $(1,\delta,10^{-1}\delta|px_i|)$-strainer.
Since $(|px_i|^{-1}N,p)$ converges to $(T_p,o)$, we get a contradiction to the liftability of a strainer, Remark \ref{rem:str}(2).
\end{proof}

The existence of a strainer controls the divergence rate of geodesics (even in a Riemannian manifold where no geodesics branch).
The next lemma generalizes the observation used in the above proof for the tangent cone.
Let $0<\rho<r/10$.

\begin{lem}\label{lem:str2}
Let $p\in N$ in $\bar{\mathcal M}(n,r,V)$.
For any $x\in B(p,\rho)\setminus\{p\}$, let $\bar x$ denote a point at distance $\rho$ from $p$ on an extension of the shortest path $px$ beyond $x$.
If $p$ is not a $(1,\delta,\varepsilon)$-strainer at some $x\in B(p,\rho/10)\setminus\{p\}$, where $\varepsilon<|px|$, then there exists $y\in B(x,\varepsilon)$ such that $|\bar x\bar y|>\rho\delta/2$.
\end{lem}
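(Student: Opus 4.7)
I would prove the contrapositive: assume $|\bar x\bar y|\le\rho\delta/2$ for every $y\in B(x,\varepsilon)$, and conclude that $p$ is a $(1,\delta,\varepsilon)$-strainer at $x$, with $\bar x$ playing the role of the opposite point. Admissibility of $\bar x$ is immediate, since $|x\bar x|=\rho\in[\varepsilon,r/10]$.

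The main reduction is to an Alexandrov-type excess bound: for every $y\in B(x,\varepsilon)$,
\[f(y):=|py|+|y\bar x|-|p\bar x|\le\rho\delta.\]
This follows from $|p\bar y|=|py|+\rho$ (since $\bar y$ extends $[py]$) together with the triangle inequalities $|p\bar y|\le|p\bar x|+|\bar x\bar y|\le|p\bar x|+\rho\delta/2$ and $|y\bar x|\le|y\bar y|+|\bar y\bar x|\le\rho+\rho\delta/2$: subtracting $|p\bar x|=|px|+\rho$ from the sum of the last two bounds yields $f(y)\le\rho\delta$, and the same bound holds for every $z\in B(x,\varepsilon)$.

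It remains to convert these excess bounds into the strainer inequality $\tilde\angle pyz+\tilde\angle\bar x yz\le\pi+\delta$ for all distinct $y,z\in B(x,\varepsilon)$. Geometrically, $f(y)\le\rho\delta$ forces $y$ to lie very close to the geodesic $[p\bar x]$, so at $y$ the directions to $p$ and to $\bar x$ are almost antipodal in $\Sigma_y$. More concretely, the $S^2$-law of cosines applied to the triangle $\bar x y\bar y$ (with $|y\bar x|,|y\bar y|\approx\rho$ and $|\bar x\bar y|\le\rho\delta/2$) bounds $\tilde\angle\bar x y\bar y$ by a small quantity, and since $p,y,\bar y$ are collinear with $y$ between $p$ and $\bar y$, this pins the direction to $\bar x$ from $y$ near the antipode of the direction to $p$. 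The strainer inequality then comes from applying the $S^2$-law of cosines again to each of the two comparison triangles $pyz$ and $\bar x yz$, feeding in the excess bounds along with the near-equalities $|py|\approx|px|$ and $|y\bar x|\approx\rho$.

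The main obstacle is that $N$ is only CAT(1), so comparison angles obey $\tilde\angle\ge\angle$: a direct bound on actual angles at $y$ therefore yields only a lower (not upper) bound on the comparison-angle sum. The proof must hence proceed by direct trigonometric computation on the model sphere, and one has to verify that the positive-curvature corrections (of order $O(r^2)$ at the tiny scale $r/10\ll 1$) are absorbed into the slack $\delta$ of the strainer condition. This careful accounting, rather than any single clever inequality, is the delicate part of the argument.
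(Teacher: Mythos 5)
Your contrapositive setup, choice of $\bar x$ as the opposite strainer, and the intermediate excess estimate are all on the right track, and you correctly identify the central obstruction: the CAT comparison $\tilde\angle\ge\angle$ converts an estimate on actual angles at $y$ into a \emph{lower} bound on the comparison-angle sum, whereas the strainer condition demands an \emph{upper} bound. But you never actually resolve this obstruction. Appealing to a ``direct trigonometric computation on the model sphere'' does not help: the six pairwise distances among $p,y,z,\bar x$ are constrained only by your excess bound (which involves $p,y,\bar x$ but says nothing about $z$), and these data alone do not force $\tilde\angle pyz+\tilde\angle\bar xyz$ to be small. In fact, your argument that the directions to $p$ and $\bar x$ from $y$ are nearly antipodal, combined with the triangle inequality for comparison angles, gives a lower bound $\tilde\angle pyz+\tilde\angle\bar xyz\ge\pi-\varkappa(\delta)$ --- the opposite of what you need if read at $y$. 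So the sketch stops exactly at the hard step.

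The missing idea is the conversion of a lower bound at one vertex into an upper bound at the other. Having shown (as you almost do, via the collinearity $\angle pyz+\angle\bar yyz=\pi$, the CAT inequality $\angle\le\tilde\angle$, and the closeness of $\bar x$ to $\bar y$) that $\tilde\angle pyz+\tilde\angle\bar xyz\ge\pi-\delta$, one does \emph{not} try to improve this to an upper bound at $y$. Instead, one adds the elementary two-angle bounds in the Euclidean comparison triangles, $\tilde\angle pyz+\tilde\angle pzy\le\pi$ and $\tilde\angle\bar xyz+\tilde\angle\bar xzy\le\pi$, and subtracts the lower bound to obtain $\tilde\angle pzy+\tilde\angle\bar xzy\le\pi+\delta$ --- the strainer inequality with vertex at $z$. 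Since $y,z\in B(x,\varepsilon)$ are arbitrary, this is the required condition. Without this ``flip to the other vertex'' step, which is the actual crux of the paper's proof, the argument does not close.
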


\begin{proof}
Suppose $|\bar x\bar y|\le\rho\delta/2$ for any $y\in B(x,\varepsilon)$.
We show that $p$ is a $(1,\delta,\varepsilon)$-strainer at $x$ with an opposite strainer $\bar x$.
Note that $\bar x$ and $\bar y$ are both at distance $>4\rho/5$ from $y$.
For any other $z\in B(x,\varepsilon)$, this implies that $\tilde\angle\bar xyz$ is $\delta$-close to $\tilde\angle\bar yyz$ (note $\rho\ll\pi$).
Since $py\bar y$ is a shortest path, angle comparison shows $\tilde\angle pyz+\tilde\angle\bar yyz\ge\pi$.
Therefore
\[\tilde\angle pyz+\tilde\angle\bar xyz\ge\pi-\delta.\]
On the other hand, since we are in a tiny ball of radius $r\ll\pi$,
\[\tilde\angle pyz+\tilde\angle pzy\le\pi,\quad\tilde\angle\bar xyz+\tilde\angle\bar xzy\le\pi.\]
Combining the three inequalities yields
\[\tilde\angle pzy+\tilde\angle\bar xzy\le\pi+\delta,\]
which is the desired condition \eqref{eq:str1} of a strainer.
\end{proof}

A strainer map can be extended by adding a distance map from another strainer in its fiber (cf.\ \cite[9.4]{LN:geo}).

\begin{lem}\label{lem:str3}
Let $F$ be a $(k,\delta,\varepsilon)$-strainer map at $p\in N$ in $\bar{\mathcal M}(n,r,V)$.
If $q\in B(p,\varepsilon)\cap F^{-1}(F(p))$ is a $(1,\delta,\varepsilon'|pq|)$-strainer at $p$, where $\varepsilon'\le\delta$, then $(F,|q\cdot|)$ is a $(k+1,10\delta,\varepsilon'|pq|)$-strainer map at $p$.
\end{lem}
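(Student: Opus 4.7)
The plan is to set the candidate new strainer to $\{a_1,\dots,a_k,q\}$ at $p$ with opposite $\{b_1,\dots,b_k,\bar q\}$, where $\{b_i\}$ is the opposite strainer for $F$ at $p$ and $\bar q$ comes from the $(1,\delta,\varepsilon'|pq|)$-strainer hypothesis on $q$, and then verify every clause of Definition \ref{dfn:str} at the new straining radius $\varepsilon'|pq|$. The membership conditions are immediate from $\varepsilon'|pq|\le\delta|pq|<\varepsilon\le|a_ip|$. All inequalities not involving $q$ or $\bar q$ transfer directly from the $(k,\delta,\varepsilon)$-strainer property of $F$ via the inclusion $B(p,\varepsilon'|pq|)\subset B(p,\varepsilon)$, and the first-type inequality for $(q,\bar q)$ is exactly the hypothesis on $q$.

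The real work consists of the four cross second-type bounds
\[
\tilde\angle a_ixq,\ \tilde\angle a_ix\bar q,\ \tilde\angle b_ixq,\ \tilde\angle b_ix\bar q\le\pi/2+10\delta
\]
for $x\in B(p,\varepsilon'|pq|)$ and $1\le i\le k$. The keystone is the identity $|a_iq|=|a_ip|$, which makes the Euclidean comparison triangle $a_ipq$ isosceles and gives $\cos\tilde\angle a_ipq=|pq|/(2|a_ip|)\in[0,1/2]$. Crucially, the $(k,\delta,\varepsilon)$-strainer hypothesis on $F$ itself implicitly forces $\varepsilon$ to be of order $\delta|a_ip|$---otherwise, for $x\in B(p,\varepsilon)$ lying off the geodesic $a_ib_i$ by more than roughly $\delta|a_ip|/2$ one could already violate the first-type inequality by choosing $y$ on the opposite side---so $|pq|/|a_ip|=O(\delta)$ and the isosceles identity sharpens to $\tilde\angle a_ipq=\pi/2\pm O(\delta)$. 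A perturbation of the law-of-cosines expression (using that $|xq|$ and $|a_ix|$ differ from $|pq|$ and $|a_ip|$ by at most $\varepsilon'|pq|$, while $|a_iq|$ is unchanged) promotes this to $\tilde\angle a_ixq=\pi/2\pm O(\delta)$, which is the first cross bound.

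With the matching lower bound $\tilde\angle a_ixq\ge\pi/2-O(\delta)$ in hand, the $F$-strainer first-type inequality $\tilde\angle a_ixq+\tilde\angle b_ixq\le\pi+\delta$ at $x$ with probe $q$ immediately yields $\tilde\angle b_ixq\le\pi/2+O(\delta)$. For the two bounds involving $\bar q$, I would run an analogous isosceles-type computation: the $q$-strainer hypothesis makes $\bar q$ approximately antipodal to $q$ at $p$, so in the tangent-cone picture the Euclidean relation $|a_i\bar q|^2\approx|a_ip|^2+|p\bar q|^2+|p\bar q||pq|$ (derived from $\vec a_i\cdot\vec q=|pq|^2/2$, itself forced by $|a_iq|=|a_ip|$) gives $\cos\tilde\angle a_ip\bar q=-|pq|/(2|a_ip|)+O(\delta)=-O(\delta)$, which together with a perturbation from $p$ to $x\in B(p,\varepsilon'|pq|)$ yields $\tilde\angle a_ix\bar q\le\pi/2+O(\delta)$; the analogous bound for $\tilde\angle b_ix\bar q$ follows either by symmetry or by combining with the $F$-strainer first-type inequality.

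The main obstacle is securing the sharp lower bound $\tilde\angle a_ipq\ge\pi/2-O(\delta)$; once the implicit smallness $|pq|/|a_ip|=O(\delta)$ enforced by the strainer definition itself is extracted cleanly, the remaining estimates reduce to routine perturbation and first-variation bookkeeping whose cumulative error fits comfortably within the $10\delta$ tolerance of the conclusion.
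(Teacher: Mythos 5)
Your proposal takes a genuinely different route from the paper. Both use the isosceles observation $|a_iq|=|a_ip|$, which gives $\tilde\angle a_ipq=\tilde\angle a_iqp\le\pi/2$. But the paper then proves $\tilde\angle b_ipq\le\pi/2+\delta$ \emph{without} ever establishing a lower bound $\tilde\angle a_ipq\ge\pi/2-O(\delta)$: it extends $a_iq$ a bit past $q$ to $\bar a$, uses the strainer condition at $q$ with $y=\bar a$ to get $\angle b_iq\bar a\le\delta$, applies the angle triangle inequality to get $\tilde\angle a_iqp+\tilde\angle b_iqp\ge\pi-\delta$, and closes with the tiny-ball angle sum $\tilde\angle b_ipq+\tilde\angle b_iqp\le\pi$. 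You instead first extract the smallness $\varepsilon=O(\delta|a_ip|)$ from the strainer definition, which turns the isosceles identity into a two-sided estimate $\tilde\angle a_ipq=\pi/2\pm O(\delta)$, and then get the $b_i$ bound by plugging $y=q$ into the first-type inequality. The smallness observation is correct (the condition $\angle a_ixb_i\ge\pi-\delta$, extracted by letting $y\to x$ in the worst direction, does force $B(p,\varepsilon)$ into a $\sim\delta|a_ip|$-tube around $a_ib_i$) and is genuine content; the paper does not use it. Worth noting: the paper's argument does not give the lower bound $\tilde\angle a_ipq\ge\pi/2-O(\delta)$, so your stronger conclusion is actually information the paper does not have.

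The gap is in your treatment of the $\bar q$ bounds. The formula $\cos\tilde\angle a_ip\bar q=-|pq|/(2|a_ip|)+O(\delta)$ requires a two-sided estimate on $|a_i\bar q|$ with error $O(\delta|a_ip||p\bar q|)$, and you derive it only ``in the tangent-cone picture.'' But there is no isosceles identity for $\bar q$ (we do not know $\bar q\in F^{-1}(F(p))$, so $|a_i\bar q|\ne|a_ip|$ in general), $|p\bar q|$ is only constrained to lie in $[\varepsilon'|pq|,\,r/10]$ and so can be of order $1$ rather than of order $|pq|$, and the first-variation expansion you invoke leaves a second-variation/curvature term that is not manifestly $O(\delta)$. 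The ``routine perturbation and first-variation bookkeeping'' therefore does not close this step. There is, however, a clean fix consistent with your setup that you did not find: for $y\ne x$ on the geodesic $xa_i$ close enough to $x$ that $y\in B(p,\varepsilon'|pq|)$, the $q$-strainer condition gives $\tilde\angle qxy+\tilde\angle\bar qxy\le\pi+\delta$, and angle monotonicity along $xa_i$ gives $\tilde\angle qxy\ge\tilde\angle qxa_i$ and $\tilde\angle\bar qxy\ge\tilde\angle\bar qxa_i$; hence $\tilde\angle a_ixq+\tilde\angle a_ix\bar q\le\pi+\delta$, and your lower bound $\tilde\angle a_ixq\ge\pi/2-O(\delta)$ then yields $\tilde\angle a_ix\bar q\le\pi/2+O(\delta)$ directly (and similarly for $b_i$). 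Replacing the tangent-cone computation with this monotonicity argument would make the $\bar q$ step rigorous and keeps the rest of your route intact.
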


\begin{proof}
We may assume $k=1$.
Let $a$ be the $(1,\delta,\varepsilon)$-strainer at $p$ defining $F$ with an opposite strainer $b$.
The assumption $F(p)=F(q)$ implies $\tilde\angle apq=\tilde\angle aqp$.
Since the sum of of these two comparison angles in a tiny ball is at most $\pi$, we have $\tilde\angle apq\le\pi/2$.
Hence it suffices to show $\tilde\angle bpq\le\pi/2+\delta$.
Indeed, since $\varepsilon'\le\delta$, these imply $\tilde\angle axq,\tilde\angle bxq\le\pi/2+10\delta$ for any $x\in B(p,\varepsilon'|pq|)$.

Extend the shortest path $aq$ a bit beyond $q$ and let $\bar a$ be the endpoint.
Then the strainer condition \eqref{eq:str1} implies $\tilde\angle bq\bar a\le\delta$ and hence $\angle bq\bar a\le\delta$.
Therefore
\[\tilde\angle aqp+\tilde\angle bqp\ge\angle aqp+\angle bqp\ge\angle aqb\ge\angle aq\bar a-\angle bq\bar a\ge\pi-\delta.\]
Since $\tilde\angle aqp\le\pi/2$, we have $\tilde\angle bqp\ge\pi/2-\delta$.
On the other hand, since we are in a tiny ball, $\tilde\angle bpq+\tilde\angle bqp\le\pi$.
Therefore $\tilde\angle bpq\le\pi/2+\delta$, as desired.
\end{proof}

Finally, let us consider the Riemannian case, i.e., $N\in\mathcal M(n,r,V)$.
Let $F=(f_1,\dots,f_k)$ be a $(k,\delta,\varepsilon)$-strainer map at $p\in N$, associated with a strainer pair $\{(a_i,b_i)\}_{i=1}^k$.
The injectivity radius bound implies that $f_i$ is smooth and regular (its gradient has norm $1$) on $B(p,\varepsilon)$, and also the upper curvature bound implies that $f_i$ is strictly convex on $B(p,\varepsilon)$.
Furthermore, their gradients $\nabla f_i$ (opposite to the directions to $a_i$) are almost orthogonal.
Indeed, the inequality \eqref{eq:str1}, together with angle comparison, implies that $\nabla f_i$ makes an angle no greater than $\delta$ with the direction to $b_i$.
Then the inequality \eqref{eq:str2}, together with angle comparison, shows that the angle between $\nabla f_i$ and $\nabla f_j$ is almost $\pi/2$ up to error $10\delta$ for every $i\neq j$.
In particular, the map $F$ is regular, and hence any fiber of $F$ is a smooth (open) submanifold in $B(p,\varepsilon)$.

\section{Rescaling}\label{sec:res}

We prove a key rescaling theorem, which is a CBA analogue of \cite[3.6]{Pet:int}.

Let $\Sigma$ be a compact metric space and $\delta>0$.
The \textit{$\delta$-packing number} of $\Sigma$ is the maximal number of points in $\Sigma$ with pairwise distance at least $\delta$.
We denote it by $\pack\delta(\Sigma)$.

Suppose $\Sigma$ is a space of directions of a GCBA space in $\bar{\mathcal M}(n,r,V)$.
Then
\[\pack\delta(\mathbb S^{n-1})\le\pack\delta(\Sigma)\le C(n,r,V,\delta),\]
where $C(n,r,V,\delta)$ is a constant depending only on $n$, $r$, $V$, and $\delta$.
The left inequality follows from the fact that $\Sigma$ admits a $1$-Lipschitz surjective map to $\mathbb S^{n-1}$ (\cite[11.3]{LN:geo}).
The right inequality follows, for example, from the upper semicontinuity of the spaces of directions (\cite[5.13]{LN:geo}) and the compactness of $\bar{\mathcal M}(n,r,V)$.

The rescaling theorem below enables us to cover our manifold by a uniformly finite number of strained annuli, by induction on the packing number of the space of directions of the limit GCBA space.

\begin{conv}\label{conv:d}
From now on, we assume that $\delta$ is less than some small constant $\delta_0=\delta_0(n,r,V)$ depending only on $n$, $r$, and $V$ (but independent of $\varepsilon$).
In view of Definition \ref{dfn:str}, only such an upper bound matters when proving something for strainers.
The actual value of $\delta_0$ will be determined by the proof of Theorem \ref{thm:ind} (see Remark \ref{rem:d}).
On the other hand, the upper bound for $\varepsilon$ is $r/10$ as before.
\end{conv}

The following notion plays an essential role in the rescaling theorem.

\begin{dfn}[cf.\ {\cite[3.5]{Pet:int}}]\label{dfn:bad}
Let $N\in\bar{\mathcal M}(n,r,V)$, $M\subset N$, $p\in M$.
We say that $p$ is \textit{$\delta$-good} in $M$ if there exists $q\in M$ that is a $(1,\delta,\varepsilon)$-strainer on $\partial B(q,|pq|)$ for some $\varepsilon>0$ (depending on $p$).
Otherwise $p$ is \textit{$\delta$-bad} in $M$.
\end{dfn}

Later we will take $M$ as a fiber of a strainer map, but here $M$ is just a subset.
The point of the above definition is that $q\in M$ is a strainer at $p$, and being a strainer on the other part of $\partial B(q,|pq|)$ is not so essential.
However, it is suitable for the following rescaling theorem and also convenient for applying Proposition \ref{prop:ind} to an annulus around $\partial B(q,|pq|)$ in $M$ later, so we defined it this way.

Now we are ready to state the rescaling theorem.
Let $A(p;s,r)$ denote the open metric annulus $B(p,r)\setminus\bar B(p,s)$ (possibly $s=0$, i.e., $B(p,r)\setminus\{p\}$).

\begin{thm}[cf.\ {\cite[3.6]{Pet:int}}]\label{thm:res}
Let $N_j\in\mathcal M(n,r,V)$, $M_j\subset N_j$ closed, and $p_j\in M_j$.
Suppose $N_j$ converges to a GCBA space N, $M_j\to M\subset N$ closed, and $p_j\to p\in M$.
Assume $p$ is a $C\delta$-bad point in $M\cap\bar B(p,\rho)$ for sufficiently small $\rho>0$ (depending only on $p$), where $C=C(n,r,V)$ is a constant depending only on  $n$, $r$, and $V$.
Then there exist $\hat p_j\in M_j$ converging to $p$ and $\sigma_j\ge 0$ converging to $0$ such that the following hold:
\begin{enumerate}
\item $\hat p_j$ is a $(1,C\delta,\delta|\hat p_jx|)$-strainer at any $x\in A(\hat p_j;\sigma_j,\rho)$.
\item if $\sigma_j>0$ for any sufficiently large $j$, then there is a convergent subsequence of $(\sigma_j^{-1}N_j,\sigma_j^{-1}M_j,\hat p_j)$, whose limit $(\hat N,\hat M,\hat p)$ satisfies
\[\sup_{x\in\hat M}\pack\delta(\Sigma_x)\le\pack\delta(\Sigma_p)-1.\]
\end{enumerate}
\end{thm}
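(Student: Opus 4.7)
The proof follows the structure of Petrunin's rescaling argument in \cite[3.6]{Pet:int}, using the CBA strainers of Definition \ref{dfn:str} in place of the CBB ones. The plan is to select $\hat p_j$ by greedy minimization over a failure scale, then read property (2) off from the minimality together with the semicontinuity of spaces of directions.

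First I define, for a large constant $C=C(n,r,V)$ and each candidate $q\in M_j$ near $p_j$,
\[\sigma_j(q)=\inf\bigl\{s\ge 0:q\text{ is a }(1,C\delta,\delta|qx|)\text{-strainer at every }x\in A(q;s,\rho)\bigr\},\]
and pick $\hat p_j\in M_j\cap\bar B(p_j,\rho_j)$ whose value $\sigma_j(\hat p_j)$ is within $1/j$ of the infimum of $\sigma_j(\cdot)$ on this ball, with $\rho_j\to 0$. Set $\sigma_j:=\sigma_j(\hat p_j)$; then $\hat p_j\to p$ (using the badness of $p$ in $M\cap\bar B(p,\rho)$ to see that optimizers cluster there), and property (1) is immediate after a harmless slight enlargement of $\sigma_j$.

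Next, $\sigma_j\to 0$: otherwise, along a subsequence $\sigma_j\ge\sigma_0>0$, and by Remark \ref{rem:str}(1) the limit point $\hat p\in M$ is still an approximate minimizer with $\sigma(\hat p)\ge\sigma_0$. At a witness point $x\in\partial B(\hat p,\sigma_0)$ of the strainer failure, Lemma \ref{lem:str2} yields a direction in which geodesic extensions spread excessively; following this to a nearby point of $M\cap\bar B(p,\rho)$ produces a competitor with strictly smaller $\sigma$-value, contradicting the minimality.

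For property (2), rescale $N_j$ by $\sigma_j^{-1}$; the curvature bound rescales to $\sigma_j^2\to 0$, so a subsequential GH limit $(\hat N,\hat M,\hat p)$ is a CAT$(0)$ GCBA space. For $x\in\hat M\cap\bar B(\hat p,1)$, lift to $x_j\in M_j$ with $|\hat p_jx_j|$ of order $\sigma_j$, so that $x_j\to p$ in $N$. Upper semicontinuity of spaces of directions (\cite[5.13]{LN:geod}) already gives $\pack\delta(\Sigma_x)\le\pack\delta(\Sigma_p)$; the extra $-1$ comes from the defining property of $\sigma_j$: at $x_j$ the direction toward $\hat p_j$ has no $(C\delta)$-antipode in $\Sigma_{x_j}$, and this non-existence passes to the limit, so $\Sigma_x$ in $\hat N$ lacks an antipode to the direction toward $\hat p$, whereas $\Sigma_p$ in $N$ does contain one, accounting for the drop. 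The main obstacle will be making this drop-by-one precise and uniform over every $x$ in the unit ball of $\hat N$, which forces the minimality of $\hat p_j$ to propagate to all points at distance $O(\sigma_j)$ from $\hat p_j$ --- the CBA analog of Petrunin's key observation in \cite[3.6]{Pet:int}.
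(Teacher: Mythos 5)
Your selection mechanism for $\hat p_j$ (minimizing the ``failure scale'' $\sigma(\cdot)$ over $M_j\cap\bar B(p_j,\rho)$) matches the paper, and your observation that badness of $p$ forces $\hat p_j\to p$ is correct. But two of your intermediate steps do not hold up, and the central argument for (2) is missing.

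On $\sigma_j\to 0$: you argue by contradiction through Lemma \ref{lem:str2}, trying to ``produce a competitor with strictly smaller $\sigma$-value.'' That lemma gives you a direction in which geodesic extensions spread; it does not hand you a new strainer point, so the competitor does not materialize. In the paper the smallness is free: by the choice of $\rho$ (via Lemma \ref{lem:str1}) the limit point $p$ is already a $(1,100\delta,10\delta|px|)$-strainer on all of $B(p,\rho)\setminus\{p\}$, and liftability (Remark \ref{rem:str}(2)) gives $\sigma(p_j)\to 0$, hence $\sigma_j\le\sigma(p_j)\to 0$ by minimality. No contradiction argument is needed or wanted here.

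On property (2): your proposed mechanism --- ``the direction toward $\hat p_j$ has no $(C\delta)$-antipode in $\Sigma_{x_j}$, and this non-existence passes to the limit'' --- does not yield a drop in the $\delta$-packing number. Having or lacking an antipode of a single direction is not a statement about the packing of $\Sigma_x$, and the badness hypothesis concerns the non-existence of a straining point $q$ on a sphere through $p$, not the structure of $\Sigma_p$ itself. The actual proof is a quantitative counting argument that you do not reproduce: for an arbitrary $x\in\hat M\cap\bar B(\hat p,1)$ and lifts $x_j$, minimality of $\sigma_j$ gives a witness $y_j$ where the strainer condition at $x_j$ fails; Lemma \ref{lem:str2} then produces $z_j$ near $y_j$ whose geodesic extension $\bar z_j$ is far (distance $>C\rho\delta/4$) from $\bar y_j$. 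Pushing the whole geodesic segment $\gamma$ from $y_j$ to $z_j$ out by the geodesic flow of $N_j$ (this is exactly where the Riemannian hypothesis enters, as Remark 3.3 stresses --- a point you omit entirely) produces a curve $\bar\gamma$ along which one finds $>C/8$ points at scale $\rho$ with pairwise separation $>2\rho\delta$, i.e.\ $>C/8$ ``new'' directions at $x_j$. One then lifts $A=\pack\delta(\Sigma_x)$ directions from $\hat N$ to the scale $\sigma_j/2$ in $N_j$ and runs them out to scale $\rho$ by extension; a doubling bound shows that only a bounded number $A'$ of these can interfere with the new directions, and since $C$ was chosen large compared to the doubling constant, the new directions together with the $A-A'$ surviving lifted directions give $>A+1$ points on $\partial B(x_j,\rho)$ with pairwise comparison angle $\gtrsim\delta$ at $x_j$. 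Passing to the limit $x_j\to p$ and using the calibration of $\rho$ against $\Sigma_p$ gives $\pack\delta(\Sigma_p)\ge A+1$. You acknowledge at the end that ``making this drop-by-one precise and uniform over every $x$ in the unit ball'' is the main obstacle; that obstacle is in fact the entire content of Step 3 of the proof, and your proposal leaves it unsolved.
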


Note that if $M_j$ is a fiber of a $(k,\delta,\varepsilon)$-strainer map and $\rho<\varepsilon$, then Lemma \ref{lem:str3} allows us to extend the strainer map on the annulus around $\hat p_j$, and the remaining small ball where the strainer map cannot be extended, after rescaling, converges to a new limit space with smaller spaces of directions.

The rough idea of the proof is as follows.
A small neighborhood of $p$ looks like a metric cone, as in Lemma \ref{lem:str1}.
On the other hand, by the upper semicontinuity \cite[5.13]{LN:geo}, the space of directions at $q$ near $p$ is not bigger than that at $p$.
If it is not smaller, then the neighborhood of $q$ of the same radius must also look like a metric cone; otherwise, by Lemma \ref{lem:str2} we will get a contradiction.

The logical structure of the following proof is exactly the same as in the CBB case.
However, there is a crucial difference in Step 3, where we use the Riemannian structure of $N_j$.
See also Remark \ref{rem:res} and Example \ref{ex:res} below.

\begin{proof}
\step{Step 1}
First we define $\rho$ and $C$.
Choose $0<\rho<r/10$ so small that the rescaled ball $\rho^{-1}B(p,10\rho)$ is sufficiently close in the Gromov-Hausdorff distance to the $10$-ball around the vertex of the tangent cone $T_p$.
In particular, we may assume that the maximal number of points in $\partial B(p,\rho)$ making comparison angles at $p$ not less than $\delta$ is exactly $\pack\delta(\Sigma_p)$.
Indeed, the former is clearly not bigger than the latter for sufficiently small $\rho$, and the opposite inequality follows from geodesic completeness and angle comparison.
Furthermore, by Lemma \ref{lem:str1}, we may assume that $p$ is a $(1,100\delta,10\delta|px|)$-strainer at any $x\in B(p,\rho)\setminus\{p\}$.

Choose $C=C(n,r,V)$ large enough compared to the uniform doubling constant for the class $\mathcal M(n,r,V)$ (see Section \ref{sec:gcba}).
It will be determined in Step 3.

\step{Step 2}
Next we define $\sigma_j$ and $\hat p_j$ satisfying (1).
For $q\in\bar B(p_j,\rho)\cap M_j$, let
\[\sigma(q):=\min\{\sigma\ge0\mid\text{$q$ is a $(1,C\delta,\delta|qx|)$-strainer at $x\in A(q;\sigma,\rho)$}\}.\]
Note that $\sigma(p_j)\to 0$ as $j\to\infty$: otherwise, by the liftability of a strainer, Remark \ref{rem:str}(2), we get a contradiction to the above choice of $\rho$ (assume $C\gg100$).

Let $\sigma_j$ be the minimum of $\sigma(q)$ for $q\in\bar B(p_j,\rho)\cap M_j$, which certainly exists by the stability of strainers, Remark \ref{rem:str}(1).
We define $\hat p_j$ as one of the minimum points of $\sigma$.
Then clearly (1) holds.
Furthermore, $\hat p_j$ converges to $p$: otherwise, since $\sigma_j\to 0$ and strainers are stable, we get a contradiction to the assumption that $p$ is a $C\delta$-bad point in $\bar B(p,\rho)\cap M$.
This is the only place where we use the bad point assumption.

\step{Step 3}
Finally we prove (2).
Even after rescaling, the doubling constants of $N_j$ are still uniformly bounded above by the same constant.
Thus, passing to a subsequence, we may assume that $(\sigma_j^{-1}N_j,\hat p_j)$ converges to a locally compact geodesically complete CAT($0$) space $(\hat N,\hat p)$ in the pointed Gromov-Hausdorff topology.
Furthermore, we may assume that $\sigma_j^{-1}M_j$ converges to $\hat M\subset\hat N$.

Let $x\in\hat M$.
Take $x_j\in\sigma_j^{-1}M_j$ converging to $x$.
Note that $x_j$ converges to $p$ in the original convergence $N_j\to N$ since $\hat p_j\to p$ and $\sigma_j\to 0$.
Since $\sigma(x_j)\ge\sigma_j$ by definition, there exists $y_j\in B(x_j,\sigma(x_j))$ at which $x_j$ is not a $(1,C\delta,\delta|x_jy_j|)$-strainer and with $|x_jy_j|\ge\sigma_j/2$.
Note that $y_j$ converges to $p$ as well in the original convergence: indeed, since $x_j\to p$, the same argument as in the first paragraph of Step 2 shows $\sigma(x_j)\to 0$.

For $z\in B(x_j,\rho)\setminus\{x_j\}$, let $\bar z$ denote a point at distance $\rho$ from $x_j$ on a (unique) extension of the shortest path $x_jz$ beyond $z$.
By Lemma \ref{lem:str2}, there exists $z_j\in B(y_j,\delta|x_jy_j|)$ such that $\bar z_j$ is at distance $>C\rho\delta/2$ from $\bar y_j$ (note $|x_jy_j|\to0$).
Let $\gamma(t)$ be the shortest path between $y_j$ and $z_j$.
Then $\bar\gamma(t)$ is a continuous curve between $\bar y_j$ and $\bar z_j$ of length $>C\rho\delta/2$.
Therefore one can find more than $C/4$ points $\bar\gamma(t_i)$ with pairwise distance $>2\rho\delta$.
Here we used the assumption that $N_j$ is a Riemannian manifold for applying the geodesic flow to $\gamma$.

Set $A:=\pack\delta(\Sigma_x)$.
By geodesic completeness and angle monotonicity, one can find $\{w_\alpha\}_{\alpha=1}^A\subset\partial B(x,1/2)$ such that $\tilde\angle w_\alpha xw_{\alpha'}\ge\delta$ for any $\alpha\neq\alpha'$.
Take $w_\alpha^j\in\partial B(x_j,\sigma_j/2)\subset N_j$ converging to $w_\alpha$ in the new convergence $\sigma_j^{-1}N_j\to\hat N$.
Then we have $\tilde\angle w_\alpha^j x_jw_{\alpha'}^j\ge\delta_j$, where $\delta_j\nearrow\delta$ as $j\to\infty$.

Extend the shortest path $x_jw_\alpha^j$ beyond $w_\alpha^j$ and let $\underline w_\alpha^j $ and $\overline w_\alpha^j$ be points at distances $|x_jy_j|$ and $\rho$ from $x_j$ on this extension, respectively (note $|x_jy_j|\ge\sigma_j/2$).
Then by angle monotonicity, we have
\[\tilde\angle\overline w_\alpha^jx_j\overline w_{\alpha'}^j\ge\tilde\angle\underline w_\alpha^jx_j\underline w_{\alpha'}^j\ge\delta_j.\]
In particular, for sufficiently large $j$, the number of $\underline w_\alpha^j$ contained in the $4\delta|x_jy_j|$-neighborhood of $y_j$ is uniformly bounded above in terms of the doubling constant.
We denote this number by $A'=A'(n,r,V)$ and may assume that
\begin{alignat*}{2}
&\underline w_\alpha^j\in B(y_j,4\delta|x_jy_j|),\quad&1\le\alpha\le A'\\
&\underline w_\alpha^j\notin B(y_j,4\delta|x_jy_j|),\quad&A'<\alpha\le A.
\end{alignat*}
Since $\gamma(t)\in B(y_j,\delta|x_jy_j|)$, angle monotonicity shows that $\overline w_\alpha^j$ ($A'<\alpha\le A$) is at distance $>2\rho\delta$ from $\bar\gamma(t)$ for any $t$.
Recall that in Step 1, $C$ was chosen to be large enough compared to the doubling constant, in particular we may assume $C\gg A'$.
Therefore, combining the $2\rho\delta$-discrete points $\bar\gamma(t_i)$ more than $C/4$ with such $\overline w_\alpha^j$ ($A'<\alpha\le A$), we get more than $A$ points in $\partial B(x_j,\rho)$ making comparison angles at $x_j$ not less than $\delta_j$.
Taking $j\to\infty$ and recalling the definition of $\rho$ in Step 1, we obtain $\pack\delta(\Sigma_p)\ge A+1$, as required.
\end{proof}

\begin{rem}\label{rem:res}
Unlike the CBB case in \cite[\S3]{Pet:int}, the above rescaling theorem does not hold for general GCBA spaces.
See the following counterexample.
In fact, we used the assumption that $N_j$ are Riemannian manifolds in Step 3 when applying the geodesic flow to $\gamma$.
It might be interesting to find a proper subclass of GCBA spaces for which the above theorem (or this geodesic flow trick) holds.

The idea behind this trick is to compare the volume of a neighborhood of $y_j$ with that of $\bar\gamma$ (inspired by \cite[1.2]{Li:lvg}).
The former is uniformly bounded above by the relative comparison \`a la Bishop-Gromov, whereas the latter is bounded below in terms of the distance between the endpoints of $\bar\gamma$, since $\bar\gamma$ is continuous, by the absolute comparison of G\"unther (see Section \ref{sec:gcba}).
Furthermore, the distance between these endpoints is controlled by the divergence rate of the geodesic flow, i.e., the strainer condition \eqref{eq:str1}, as in Lemma \ref{lem:str2}.
Therefore, if $x_j$ is not a strainer at $y_j$, the volume of a neighborhood of $y_j$ will be less than that of $\bar\gamma$ by some fixed amount.
The Riemannian structure was used to ensure the continuity of $\bar\gamma$.
\end{rem}

\begin{ex}\label{ex:res}
Let $M_j$ be the gluing of two Euclidean spaces along an interval $I_j$ of length $1/j$ contained in them, which is a geodesically complete CAT($0$) space.
This sequence converges to the one-point gluing of the Euclidean spaces, denoted by $M$.
If $p\in M$ is the gluing point, which is bad in $M$, then $\hat p_j\in M_j$ of Theorem \ref{thm:res} should be the midpoint of $I_j$ with $\sigma_j=1/2j$.
The rescaled limit $\hat M$ will be the gluing of the Euclidean spaces along an interval $\hat I$ of length $2$, where $\hat p_j$ converges to the midpoint $\hat p$ of $\hat I$.
However, $\Sigma_p$ is the disjoint union of two unit spheres, whereas $\Sigma_{\hat p}$ is the gluing of the spheres at antipodal points.
Since they have the same volume, this generally contradicts to the conclusion (2) of Theorem \ref{thm:res}.
\end{ex}

\section{Proof}\label{sec:prf}

For a GCBA space $N\in\bar{\mathcal M}(n,r,V)$, we say that $M\subset N$ is a \textit{$(k,\delta,\varepsilon)$-strained surface} if it is a fiber of a $(k,\delta,\varepsilon)$-strainer map defined on its open $\varepsilon$-neighborhood in $N$ (in particular, $M$ is compact).
Recall that if $N$ is a Riemannian manifold, then $M$ is a smooth submanifold (see the end of Section \ref{sec:str}).

For a Riemannian manifold $M$, the sectional, Ricci, and scalar curvatures are denoted by $K$, $\Ric$, and $\Sc$, respectively, with subscript $M$ if necessary.
For $x\in M$ and $u\in\Sigma_x$, we will denote by
\begin{gather*}
K^\pm(x)=\max_{\sigma\subset T_x}\{\pm K(\sigma),0\},\quad\Sc^\pm(x)=\max\{\pm\Sc(x),0\},\\\Ric^\pm(u,u)=\max\{\pm\Ric(u,u),0\},
\end{gather*}
where $\sigma$ runs over all $2$-dimensional subspaces in $T_x$.

We prove the following theorem by reverse induction on $k$.
Recall that we are assuming $\delta<\delta_0=\delta_0(n,r,V)$ and $\varepsilon<r/10$ (see Convention \ref{conv:d}).

\begin{thm}[cf.\ {\cite[4.2]{Pet:int}}]\label{thm:ind}
Let $M$ be a $(k,\delta,\varepsilon)$-strained surface in a Riemannian manifold $N\in\mathcal M(n,r,V)$, where $k\le n-2$.
Then we have
\[\int_M\Sc^-_M<C\left(1+\int_MK^+_M\right),\]
where $C=C(n,r,V,\varepsilon)$ is a constant depending only on $n$, $r$, $V$, and $\varepsilon$.
\end{thm}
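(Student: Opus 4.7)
The plan is to prove Theorem \ref{thm:ind} by induction on the fiber dimension $m = n-k$, starting from $m = 2$ (i.e., $k = n-2$) and ending at $m = n$ (i.e., $k = 0$, which recovers Theorem \ref{thm:main}). In the base case $m = 2$, the fiber $M$ is a compact surface, so $\Sc_M = 2K_M$ and Gauss-Bonnet gives $\int_M \Sc_M = 4\pi\chi(M)$, whence
\[\int_M\Sc_M^- = \int_M\Sc_M^+ - \int_M\Sc_M = 2\int_M K_M^+ - 4\pi\chi(M).\]
The homotopical stability of strainer fibers \cite[13.1]{LN:geod} combined with the Gromov-Hausdorff precompactness of $\mathcal{M}(n,r,V)$ restricts $M$ to finitely many homeomorphism types, so $|\chi(M)| \le C(n,r,V,\varepsilon)$, giving the required bound.

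For the inductive step, suppose the bound holds for $k+1$. At each point $p$ of $M$, I would try to extend the strainer map via Lemma \ref{lem:str3}: find a point $q$ in the fiber direction so that $(F, |q\cdot|)$ becomes a $(k+1, C\delta, \varepsilon')$-strainer map near $p$. When this extension is available on a neighborhood, its fibers are codimension-one $(k+1, C\delta, \varepsilon')$-strained surfaces $L$ inside $M$. Applying the Bochner identity \eqref{eq:boc} to eliminate the Ricci term in the Gauss equation \eqref{eq:gau}, then integrating via the coarea formula for $f = |q\cdot|$, yields schematically
\[\int_M \Sc_M = \int_a^b\int_{L_t}\Sc_L\,\frac{d\mathcal H^{m-1}}{|\nabla f|}\,dt + \int_M G + 2\left(\int_{L_a}H - \int_{L_b}H\right).\]
The strainer condition combined with $K\le 1$ controls the principal curvatures of the level sets of the distance function $f$, so $\int_M G$ and the two boundary terms are each bounded by $C(n,r,V,\varepsilon)$. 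The inductive hypothesis applied fiberwise gives $\int_{L_t}\Sc_L^- \le C(1 + \int_{L_t}K_L^+)$, and the Gauss equation for sectional curvature bounds $K_L^+\le K_M^+ + C$; coarea then converts the $\Sc_L^-$ contribution into a bound by $C(1 + \int_M K_M^+)$. Combined with the trivial estimate $\Sc_M^+ \le n(n-1)K_M^+$, this produces the desired inequality $\int_M \Sc_M^- \le C(1 + \int_M K_M^+)$.

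If such an extension fails somewhere, I would argue by contradiction: a sequence $(N_j, M_j)$ violating the bound subconverges to a GCBA limit $(N, M)$ with a bad point $p\in M$ (in the sense of Definition \ref{dfn:bad}). The rescaling theorem \ref{thm:res} then produces $\hat p_j\to p$ and $\sigma_j\to 0$ such that $\hat p_j$ is a strainer on the annulus $A(\hat p_j;\sigma_j,\rho)$, so the integral over $M_j\cap A(\hat p_j;\sigma_j,\rho)$ is handled by the preceding argument via Lemma \ref{lem:str3}. For the residual small ball, $(\sigma_j^{-1}N_j,\sigma_j^{-1}M_j,\hat p_j)$ subconverges to a new limit on which every point $x$ satisfies $\pack\delta(\Sigma_x) \le \pack\delta(\Sigma_p) - 1$; one iterates this rescaling procedure. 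Since $\pack\delta \ge \pack\delta(\mathbb S^{n-1})$ is bounded below, the process terminates after finitely many steps, eventually producing a Riemannian-manifold limit with no bad points, where the extension argument applies. The main obstacle I anticipate is the bookkeeping of the rescaling iteration---verifying that the rescaled spaces remain amenable to the induction and that the residual bad-ball contributions collect into a bounded total---together with establishing the uniform fiber-volume bound needed in the coarea step, which the paper defers to Section \ref{sec:app}.
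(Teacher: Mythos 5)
Your high-level strategy coincides with the paper's: reverse induction on $k$ with the base case via Gauss--Bonnet and homotopical stability of fibers, inductive step via extending the strainer map in a fiber direction and running Bochner/Gauss/coarea, and a secondary induction on the packing number of spaces of directions (via the rescaling Theorem~\ref{thm:res}) to handle points where the extension fails. The appeal to Proposition~\ref{prop:vol} for the coarea/mean-curvature step is also exactly what the paper does.

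However, the mechanism you describe for the inductive step has a genuine gap, and it is not the ``bookkeeping'' you flag at the end. You assert that the strainer condition together with $K\le 1$ bounds $\int_M G$ and the boundary terms $\int_{L_a}H$, $\int_{L_b}H$ each by a constant $C(n,r,V,\varepsilon)$, and you claim the pointwise estimate $K_L^+\le K_M^+ + C$. Neither is true. Near the vertex $p$ of the cone-like region, the principal curvatures of the level spheres $L_t=f^{-1}(t)$ blow up like $1/t$, so $G$ and $H$ are \emph{not} pointwise bounded, and neither are $\int G$ nor $\int_{L_a}H$ a priori. In the paper, $\int_{L_{\rho'}}H\le C\rho^{m-1}$ is obtained only for a \emph{chosen} level $\rho'\in[\rho/2,\rho]$ by an averaging argument resting on the fiber-volume bound $\vol L_t\le Ct^m$ (Claim~\ref{clm:H}/Proposition~\ref{prop:vol}); $\int_{L_\sigma}H$ is never bounded at all but is simply \emph{dropped} using $H>0$; and $\int G$ is not bounded by a constant but by $C\bigl(1+\int K_M^+\bigr)$, via a \emph{second} application of the Bochner formula combined with $\Ric(u,u)\le mK_M^+$. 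Likewise the correct pointwise Gauss-equation bound is $K_L^+\le K_M^++G$ (Claim~\ref{clm:G}), and the resulting $\int G$ term is what forces the Bochner step above; replacing $G$ by a constant $C$ is not available.

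What makes all of this work --- and what your sketch omits entirely --- is the strict \emph{convexity} of distance functions under the upper curvature bound $K\le 1$ (together with the injectivity radius bound). This is what gives $\kappa_i>0$, hence $G>0$ and $H>0$ (Claim~\ref{clm:conv}), which in turn permits dropping $\int_{L_\sigma}H$, guarantees $K_L^+\le K_M^++G$, and makes the Bochner identity usable as a one-sided estimate. The paper explicitly highlights this as the point where the CBA calculation diverges from Petrunin's CBB argument (semiconcavity vs.\ convexity). Without recognizing this sign structure, the inductive step as you have sketched it does not close.
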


The $k=0$ case, i.e., $M=N$, together with the upper bounds on curvature and volume, implies Theorem \ref{thm:main}.
As mentioned in Convention \ref{conv:d}, the upper bound $\delta_0$ for $\delta$ is determined by the following proof.
More precisely, there will be other constants depending $\delta$, such as $C\delta$ or $\varkappa(\delta)$, and we choose $\delta_0$ so small that all these constants will be less than some dimensional constant (see Remark \ref{rem:d}).

\begin{rem}\label{rem:ind}
To be precise, we need a slightly generalized version of Theorem \ref{thm:ind} to logically complete the induction step.
There are two necessary generalizations.
\begin{enumerate}
\item Modifying the definition of a strained surface.
\item Localizing the statement of uniform boundedness.
\end{enumerate}
However, for simplicity, we defer them to Remarks \ref{rem:ind1} and \ref{rem:ind2}, respectively.
\end{rem}

\begin{rem}
The original CBB version \cite[4.2]{Pet:int} of Theorem \ref{thm:ind} was formulated for \textit{corner surfaces}, which is a subclass of modified strained surfaces.
More precisely, this requires an acute intersection angle condition for the level sets of modified strainer functions (\cite[3.1(iv)]{Pet:int}).
Our proof also shows that this assumption is not necessary for the statement (though the same technique is needed for the proof).
This is basically a matter of preference and is not an essential difference.
\end{rem}

The proof of Theorem \ref{thm:ind} has three steps.
In Step 1, we prove the base case of reverse induction.
In Step 2, we prove Proposition \ref{prop:ind} needed for the next step.
This proposition concerns a fixed manifold and has nothing to do with convergence.
Finally in Step 3, we show the induction step, carrying out another induction on the packing number, based on the rescaling theorem \ref{thm:res}.

\begin{proof}
\step{Step 1}
We first prove the base case $k=n-2$ by contradiction (cf.\ \cite[4.4]{Pet:int}). 
Suppose there exists a sequence of Riemannian manifolds $N_j\in\mathcal M(n,r,V)$ and $(n-2,\delta,\varepsilon)$-strained surfaces $M_j\subset N_j$ such that
\[\lim_{j\to\infty}\frac{\int_{M_j}\Sc^-_{M_j}}{1+\int_{M_j}K^+_{M_j}}=\infty.\]
Passing to a subsequence, we may assume that $N_j$ converges to a GCBA space $N$ and $M_j$ converges to an $(n-2,\delta,\varepsilon)$-strained surface $M\subset N$ (by the stability of strainers, Remark \ref{rem:str}(1), and the openness of strainer maps, \cite[8.2]{LN:geo}, if $\delta\ll1/n$).

Since $M_j$ are $2$-dimensional manifolds, the Gauss-Bonnet theorem gives
\[4\pi\chi(M_j)=\int_{M_j}\Sc_{M_j}=\int_{M_j}2K_{M_j}^+-\int_{M_j}\Sc_{M_j}^-\]
where $\chi$ denotes the Euler characteristic.
On the other hand, a stability result of Lytchak-Nagano \cite[13.1]{LN:geo}, based on Petersen's theorem \cite{Peter}, tells us that $M_j$ is homotopy equivalent to $M$ for sufficiently large $j$, if $\delta\ll1/n$ (actually it turns out that $M$ is a topological manifold, \cite[6.3]{LN:top}, and hence homeomorphic to $M_j$ by the $\alpha$-approximation theorem from geometric topology, \cite[4.7]{LN:top}).
Therefore we obtain a uniform bound on the value $(1+\int_{M_j}K^+_{M_j})^{-1}\cdot\int_{M_j}\Sc^-_{M_j}$, a contradiction.

\step{Step 2}
To prove the induction step in Step 3, we need the following proposition.
This gives a local estimate similar to the one of Theorem \ref{thm:ind} on a strained annulus in our strained surface.
In Step 3, by using the rescaling theorem \ref{thm:res}, we will cover our strained surface by a uniformly finite number of such strained annuli, whence the global estimate follows (actually by contradiction).

\begin{prop}[cf.\ {\cite[4.3 $\textrm B_k$]{Pet:int}}]\label{prop:ind}
Suppose Theorem \ref{thm:ind} holds for $k+1$, where $k\le n-3$.
Let $M$ be a $(k,\delta,\varepsilon)$-strained surface in $N\in\mathcal M(n,r,V)$.
Assume $p\in M$ is a $(1,\delta,\varepsilon'|px|)$-strainer at any $x\in A(p;\sigma,\rho)\cap M$, where $0\le4\sigma<\rho<\varepsilon$ and $\varepsilon'\le\delta$.
Then we have
\begin{equation}\label{eq:ind}
\int_{A(p;2\sigma,\rho/2)\cap M}\Sc_M^-<C\left(1+\int_{A(p;\sigma,\rho)\cap M}K_M^+\right),
\end{equation}
where $C=C(n,r,V,\varepsilon')$.
\end{prop}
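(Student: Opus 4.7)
The plan is to extend the defining strainer map of $M$ by the distance function $t = |p\cdot|$, slice $M$ by the resulting level sets, and apply the inductive hypothesis fiberwise together with the coarea, Gauss, and Bochner formulas. First, at each $x\in A(p;\sigma,\rho)\cap M$ the hypothesis says that $p$ is a $(1,\delta,\varepsilon'|px|)$-strainer, so Lemma~\ref{lem:str3} promotes the defining strainer map of $M$ to a $(k+1,10\delta,\varepsilon'|px|)$-strainer map $(F,|p\cdot|)$. Every level set $L_t:=(t|_M)^{-1}(t)\cap A(p;\sigma,\rho)$ is thus a $(k+1)$-strained surface in $N$, and the inductive hypothesis (Theorem~\ref{thm:ind} at level $k+1$) gives
\[\int_{L_t}\Sc_{L_t}^-\le C\Bigl(1+\int_{L_t}K_{L_t}^+\Bigr).\]
The strainer orthogonality also forces $|\nabla^M t|=1-O(\delta)$, so $t|_M$ is regular on the annulus with compact level sets and the coarea formula applies.

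Next, I would reduce the problem to bounding $-\int_{A(p;\sigma,\rho/2)}\Sc_M$ from above, since $\Sc_M^-=\Sc_M^+-\Sc_M$ and $\Sc_M^+\le n(n-1)K_M^+$ handles the $\Sc_M^+$ part. To estimate $-\int\Sc_M$ I would combine the Gauss formula \eqref{eq:gau} applied pointwise on each slice with the Bochner formula \eqref{eq:boc} applied to $f=t|_M$ on the full annulus; after using coarea to rewrite the $\Sc_{L_t}$-contribution, the two identities collapse into
\[-\!\int_{A(p;\sigma,\rho/2)}\!\!\Sc_M=-\!\int_\sigma^{\rho/2}\!\!\int_{L_t}\frac{\Sc_{L_t}}{|\nabla^M t|}\,dt-\int_{A(p;\sigma,\rho/2)}\!G-2\!\int_{L_\sigma}\!H+2\!\int_{L_{\rho/2}}\!H.\]
Since $-\Sc_{L_t}\le\Sc_{L_t}^-$, the first term on the right is absorbed by the inductive estimate, modulo the quantity $\int_{L_t}K_{L_t}^+$ that still needs to be related to $\int K_M^+$.

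To close the estimate I would control $|H|$, $|G|$ and $K_{L_t}^+$ via the strainer structure. The CBA condition $K_N\le 1$ together with Rauch comparison gives a one-sided \emph{lower} bound on $\Hess(t)$, while the strainer condition with opposite strainer $\bar p$ (a short extension of the shortest path $px$ past $x$) supplies the complementary \emph{upper} bound on its comparison-angle defect. Together these force the principal curvatures of $L_t\subset M$ to be uniformly bounded by $C=C(n,r,V,\varepsilon')$, hence $|H|,|G|\le C$ pointwise; via the Gauss equation for $L_t\subset M$, this also gives $K_{L_t}^+\le K_M^++C$. Combined with the uniform volume bounds on $A(p;\sigma,\rho)\cap M$, $L_\sigma$ and $L_{\rho/2}$ furnished by Section~\ref{sec:app}, the remaining terms are bounded by $C+C\int_{A(p;\sigma,\rho)}K_M^+$, yielding \eqref{eq:ind}.

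The hardest step is the last one: under only an upper sectional curvature bound one has no a priori upper bound on the second fundamental form of a distance level set. Extracting the missing \emph{upper} Hessian bound from the strainer conditions, and verifying the volume estimates for the boundary slices $L_\sigma$ and $L_{\rho/2}$ invoked from the appendix, are what require the most care.
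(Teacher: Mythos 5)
Your high-level scheme (extend the strainer map by $f=|p\cdot|$ via Lemma~\ref{lem:str3}, slice by level sets, apply Theorem~\ref{thm:ind} at level $k+1$ to each slice, then combine the coarea, Gauss, and Bochner formulas) is exactly the paper's plan, and your reduction $\Sc_M^-\le\Sc_M^++(-\Sc_M)$ with $\Sc_M^+\le n(n-1)K_M^+$ plus the identity you derive from \eqref{eq:gau} and \eqref{eq:boc} are all fine. The problem is in your final step.

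You propose to close the argument by showing a uniform \emph{pointwise} bound $|H|,|G|\le C(n,r,V,\varepsilon')$ and the consequence $K_{L_t}^+\le K_M^++C$, extracted from $K_N\le 1$ plus the strainer condition. That bound does not hold. The level set $L_t$ is a distance sphere in $M$ at radius $t$, and its principal curvatures behave like $1/t$ as $t\to 0$ (already in Euclidean space), so $H\sim m/t$ and $G\sim m(m-1)/t^2$; since $\sigma$ may be $0$, there is no uniform constant $C$. The strainer condition constrains comparison angles at scale $\varepsilon'|px|$ but does not constrain the Hessian of $f$ from above, and an upper sectional curvature bound never gives an upper bound on the second fundamental form of a distance sphere. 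You yourself flag this as the dangerous step, and it is indeed where the argument breaks. The paper avoids it entirely: it proves only the \emph{sign} $H,G>0$ (Claim~\ref{clm:conv}, using strict convexity of $f$ in $N$ and that $\nabla^N f$ is almost tangent to $M$), keeps $G$ in the inequality $K_L^+\le K_M^++G$ (Claim~\ref{clm:G}) rather than replacing it by a constant, and controls $\int_{L_{\rho'}}H$ only \emph{in the integral}, and only at a well-chosen radius $\rho'\in[\rho/2,\rho]$, by the divergence/coarea argument $\frac{d}{dt}\vol L_t\ge\int_{L_t}H$ combined with the volume bound $\vol L_t\le Ct^m$ from Proposition~\ref{prop:vol} (Claim~\ref{clm:H}). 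Positivity then lets one discard the $\int_{L_\sigma}H$ boundary term, and the $\int G$ term is bounded by applying \eqref{eq:boc} a second time. You would need to replace your pointwise-bound step by this integral mean-value argument; as written, the estimate you need is false.
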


We prove it by using the coarea, Gauss, and Bochner formulas as in \cite[4.5]{Pet:int}.
However, as explained in Section \ref{sec:ad}, our calculations are somewhat different from those in \cite{Pet:int}, due to the difference between the semiconcavity and (semi)convexity of distance functions in CBB and CBA manifolds (see Claims \ref{clm:conv} and \ref{clm:G}).

We first modify the distance function from $p$ slightly, as in the first formula in \cite[4.5]{Pet:int}, so that the resulting level set intersects $M$ at an acute angle (Claim \ref{clm:grad}, cf.\ \cite[3.1(iv)]{Pet:int}).
This modification makes the resulting function still convex with respect to the intrinsic metric of $M$, as we will see in Claim \ref{clm:conv}.
Its necessity is the same as in the CBB case.

Let $F=(f_1,\dots,f_k)$ be the strainer map defining $M$ and $G=(g_1,\dots,g_k)$ the opposite strainer map.
We define
\begin{equation}\label{eq:f}
f:=|p\cdot|+\varkappa(\delta)\sum_{i=1}^k\left(g_i(\cdot)-g_i(p)\right),
\end{equation}
where $\varkappa(\delta)$ is a positive function depending only on $n$ such that $\varkappa(\delta)\gg\delta$ and $\varkappa(\delta)\to0$ as $\delta\to0$ (to be determined in Claims \ref{clm:grad} and \ref{clm:conv}).
Note that $\varkappa(\delta)$ can be treated in the same manner as $\delta$, except that it is much greater than $\delta$.
The function $f$ behaves like the distance function from $p$ in the sense that
\begin{equation}\label{eq:fp}
|f(x)-|px||\le k\varkappa(\delta)|px|
\end{equation}
for any $x\in N$.
Note that $f$ is smooth, regular (its gradient has norm almost $1$, provided $\delta$ is small enough), and strictly convex on $B(p,\varepsilon)\setminus\{p\}$ (see the end of Section \ref{sec:str}).
Furthermore, the gradient of $f$ has the following properties, especially the obtuse angle condition with those of $f_i$.

\begin{clm}\label{clm:grad}
The gradient of $f$ in $N$ makes an obtuse angle almost $\pi/2$ with that of $f_i$ for any $i$.
More precisely, if we denote these gradients by $\nabla f$ and $\nabla f_i$, respectively, then
\[-2\varkappa(\delta)<\langle\nabla f,\nabla f_i\rangle<-\varkappa(\delta)/2\]
on $A(p;\sigma,\rho)\cap M$.
In particular, the gradient of $f$ in $M$ has norm almost $1$.
\end{clm}

\begin{proof}
By Lemma \ref{lem:str3}, $(F,|p\cdot|)$ is a $(k+1,10\delta,\varepsilon'|px|)$-strainer map at any $x\in A(p;\sigma,\rho)\cap M$.
In particular, as we have seen in the end of Section \ref{sec:str}, $\nabla f_i$ is almost orthogonal to $\nabla |p\cdot|$.
Similarly, $\nabla f_i$ is almost opposite to $\nabla g_i$ and almost orthogonal to $\nabla g_j$ for any $j\neq i$.
Note that all the errors are bounded above by $100\delta$, i.e.,
\[|\langle\nabla f_i,\nabla g_i\rangle+1|<100\delta,\quad|\langle\nabla f_i,\nabla g_j\rangle|,|\langle\nabla f_i,\nabla|p\cdot|\rangle|<100\delta.\]
These imply the desired inequality because $\varkappa(\delta)\gg\delta$ and $\delta\ll1/n$.
\end{proof}

We will now use the same notation as in Section \ref{sec:boc}, by identifying $M$ and $f$ with those in that section (thus $m+1=n-k$).

\begin{conv}
From now on, we basically use the notation in $M$ instead of $N$.
For example, $\nabla f$ denotes the gradient of $f$ in $M$.
When we use the notation in $N$, it will be indicated explicitly, like $\nabla ^Nf$.
In particular, as in Section \ref{sec:boc},
\begin{itemize}
\item $L=L_t$ denotes the level set of $f$ (in $M$) for $t\in[\sigma,\rho]$.
\item $u=\nabla f/|\nabla f|$ denotes the unit normal field to $L$ (in $M$).
\item $G$ and $H$ denote the Gauss-type and mean curvatures for $L$ (in $M$).
\end{itemize}
\end{conv}

By Lemma \ref{lem:str3} and \eqref{eq:f}, $L_t$ is a slight perturbation of a $(k+1,10\delta,\varepsilon't/10)$-strained surface.
We refer to it as a \textit{modified strained surface} (cf.\ corner surface in \cite[3.1]{Pet:int}).
In what follows, we will apply the inductive assumption of Theorem \ref{thm:ind} to the rescaled surface $t^{-1}L_t$ (see \eqref{eq:ass}).
As mentioned in Remark \ref{rem:ind}, technically this requires two generalizations of Theorem \ref{thm:ind}.
However, we defer the details to the end of Step 2 (Remarks \ref{rem:ind1} and \ref{rem:ind2}) and proceed with the proof assuming these generalizations.

The above obtuse angle condition implies

\begin{clm}[cf.\ {\cite[4.5(ii)c]{Pet:int}}]\label{clm:conv}
The principal curvatures of $L$ in $M$ are positive.
In particular, the Gauss-type and mean curvatures $G$ and $H$ are positive. 
\end{clm}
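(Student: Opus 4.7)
The plan is to prove the stronger statement that $\Hess^M f > 0$ on $TL$, which implies positivity of every $\kappa_i$ (and hence of $H=\sum_i\kappa_i$ and $G=2\sum_{i<j}\kappa_i\kappa_j$), since for $v,w\in TL$ the shape operator satisfies $\langle S(v),w\rangle = |\nabla^M f|^{-1}\Hess^M f(v,w)$. I would begin from the standard Gauss-type decomposition
\[\Hess^M f(v,v) = \Hess^N f(v,v) + \langle \nabla^N f, \II_M(v,v)\rangle,\]
where $\II_M$ is the vector-valued second fundamental form of $M$ in $N$, and handle the two terms separately.

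For the ambient term, $v\in TL$ forces $v\perp\nabla^M f$ in $M$, and since $(\nabla^N f)^\perp\perp TM\ni v$, also $v\perp\nabla^N f$ in $N$. The Hessian comparison for the distance function under $K_N\le 1$, applied in the range $f(x)\le r/10\ll\pi/2$, yields
\[\Hess^N f(v,v)\ge \cot(f(x))|v|^2\ge\cot(r/10)|v|^2>0,\]
a uniform positive lower bound depending only on $r$.

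For the cross term, Claim \ref{clm:grad} gives $|(\nabla^N f)^\perp|\le\varkappa(\delta)$. Expanding the normal part in the almost orthonormal frame $\{\nabla^N g_j\}_{j=1}^k$ arising from the strainer distance functions $g_j=|a_j\cdot|$, and using the identity $\langle \II_M(v,v),\nabla^N g_j\rangle = -\Hess^N g_j(v,v)$ (obtained by twice differentiating $g_j\circ\gamma$, which is constant along an $M$-geodesic $\gamma\subset M$), the cross term takes the form $-\sum_j\xi_j\Hess^N g_j(v,v)$ up to $\varkappa(\delta)$-corrections for non-exact orthonormality, with $|\xi_j|=|\langle\nabla^N f,\nabla^N g_j\rangle|\le\varkappa(\delta)$ by the strainer condition applied to $(F,f)$ via Lemma \ref{lem:str3}.

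The main subtlety I expect is bounding the $\Hess^N g_j$ factors: the bound $K_N\le 1$ only produces lower Hessian bounds for distance functions, with no uniform upper bound over $\mathcal M(n,r,V)$. However, for the fixed smooth Riemannian manifold $N$ under consideration, each $\Hess^N g_j$ is continuous and hence bounded on the compact ball $\bar B(p,r/10)$. Choosing $\delta$ small enough relative to this bound, the cross term is swamped by the positive ambient contribution of the second paragraph, giving $\Hess^M f(v,v)>0$ as required.
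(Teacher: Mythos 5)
Your proof follows essentially the same route as the paper's: the decomposition $\Hess^M f(v,v)=\Hess^N f(v,v)+\langle\nabla^N f,\II_M(v,v)\rangle$, the strict convexity of $f$ in $N$ coming from the upper curvature bound on a scale below the injectivity radius, and Claim~\ref{clm:grad} to control the normal component of $\nabla^N f$. Where the paper simply writes ``Therefore $\langle\nabla_v u,v\rangle$ is positive (provided $\delta$ is small enough),'' you expand the cross term in the almost-orthonormal frame $\{\nabla^N g_j\}$ and correctly flag the real subtlety: the curvature bound $K_N\le 1$ gives a \emph{lower} Hessian bound $\Hess^N g_j\ge\cot(|a_jx|)$ but no \emph{upper} bound on $\Hess^N g_j$ (equivalently on $|\II_M|$) that is uniform over $\mathcal M(n,r,V)$, since that would require a lower curvature or Ricci bound.

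The problem is with your resolution of this subtlety. Choosing $\delta$ small ``relative to this bound'' makes $\delta$ depend on the particular manifold $N$, whereas the paper's whole inductive scheme requires a single $\delta$ fixed in advance as a function of $n$, $r$, $V$ alone — the author stresses this just before the proof of Theorem~\ref{thm:ind}, and Proposition~\ref{prop:ind} and its downstream estimates all take place at that fixed $\delta$. So your patch establishes the claim for each fixed $N$ with an $N$-dependent threshold on $\delta$, which is not the statement the rest of Section~\ref{sec:prf} needs. To genuinely close the claim in the form the paper uses it, you would need either a uniform upper bound on $|\II_M(v,v)|$ for $(k,\delta,\varepsilon)$-strained surfaces over the class $\mathcal M(n,r,V)$, or a different argument that avoids comparing the magnitude of $\Hess^N f$ against $|\II_M|$. (To your credit, the paper's own one-line justification does not visibly supply such a uniform bound either, so you have at least put your finger on the right place.)
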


\begin{proof}
By definition (see Section \ref{sec:boc}), it suffices to show that $\langle\nabla_vu,v\rangle>0$ for any $v\in T_xL$.
A direct calculation shows
\begin{align*}
\langle\nabla_v u,v\rangle
&=\frac{\langle\nabla_v\nabla f,v\rangle}{|\nabla f|}
=\frac{\Hess f(v,v)}{|\nabla f|}\\
&=\frac{\Hess^Nf(v,v)+\langle\II_M^N(v,v),\nabla^Nf\rangle}{|\nabla f|}.
\end{align*}
Here $\Hess^Nf$ denotes the Hessian of $f$ in $N$ and $\II_M^N:T_xM\times T_xM\to(T_xM)^\bot$ denotes the second fundamental form of $M$ in $N$.
Since $f$ is strictly convex in $N$, $\Hess^Nf$ is positive definite.
Similarly, the strict convexity of $f_i$ implies
\[\langle\II_M^N(v,v),\nabla^N f_i\rangle=-\Hess^Nf_i(v,v)<0\]
Recall that $\nabla^Nf_i$ are $100\delta$-almost orthonormal, as seen in the end of Section \ref{sec:str}.
Since $\varkappa(\delta)\gg\delta$, it easily follows from the obtuse angle condition of Claim \ref{clm:grad} that $\langle\II_M^N(v,v),\nabla^N f\rangle$ is positive (see Lemma \ref{lem:lin}).
Therefore $\langle\nabla_v u,v\rangle$ is positive.
\end{proof}

The above positivity of the principal curvatures makes the following argument simpler in many places, but actually it is not so essential.
Only the uniform lower bound is important, as the uniform upper bound was important in the CBB case.

The fact that $L$ is a convex hypersurface in $M$ implies

\begin{clm}[cf.\ {\cite[4.5 Trivial inequality (ii)]{Pet:int}}]\label{clm:G}
\[K_L^+\le K_M^++G\]
\end{clm}

\begin{proof}
Let $e_1,\dots e_m$ be an arbitrary orthonormal basis of $T_xL$.
Let $G_{ij}$ denote the Gauss-type curvature of $L$ in $M$ for the tangent plane $\sigma_{ij}$ spanned by $e_i$ and $e_j$, i.e.,
\[G_{ij}=K_L(\sigma_{ij})-K_M(\sigma_{ij}).\]
It suffices to show $G_{12}\le G$.
Since $L$ is a convex hypersurface in $M$ by Claim \ref{clm:conv}, we have $G_{ij}\ge 0$ for any $(i,j)$.
Summing the above equalities for all $(i,j)$ and applying the inequalities except for $(i,j)=(1,2)$, we obtain
\[\Sc_L-\Sc_M+2\Ric(u,u)\ge G_{12},\]
where the left-hand side equals $G$ by the Gauss formula \eqref{eq:gau}.
\end{proof}

Finally we give an estimate on the integral of $H$.
The opposite estimate was also used in the CBB case at the end of \cite[4.5]{Pet:int}.

\begin{clm}\label{clm:H}
There exists $\rho'\in[3\rho/5,4\rho/5]$ such that
\[\int_{L_{\rho'}}H\le C\rho^{m-1},\]
where $C=C(n,r,V,\varepsilon')$ and $m=\dim L_{\rho'}$.
\end{clm}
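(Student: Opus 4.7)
The plan is to combine the divergence theorem with the coarea formula on the annular region $f^{-1}([\rho/2,\rho])\subset M$, then invoke the volume bound for fibers of strainer maps from Section \ref{sec:app} and conclude by the mean value theorem in $t$.

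First I would observe that $H=\di_M u$: for an orthonormal frame $(e_1,\dots,e_m,u)$ of $TM$ with $e_i$ tangent to $L$,
\[\di_M u=\sum_{i=1}^m\langle\nabla_{e_i}u,e_i\rangle+\langle\nabla_u u,u\rangle=H+0,\]
the last term vanishing because $|u|\equiv1$. Applying the divergence theorem to $f^{-1}([\rho/2,\rho])\subset M$, whose boundary is $L_\rho\cup L_{\rho/2}$ with outward unit normals $u$ and $-u$ respectively, gives
\[\int_{f^{-1}([\rho/2,\rho])}H=\vol(L_\rho)-\vol(L_{\rho/2}),\]
while the coarea formula for $f:M\to\mathbb R$ yields
\[\int_{f^{-1}([\rho/2,\rho])}H=\int_{\rho/2}^\rho\int_{L_t}\frac{H}{|\nabla f|}\,dt.\]

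Next I would control signs. By Claim \ref{clm:conv}, $H\ge0$, and since $\nabla^Mf$ is the orthogonal projection onto $TM$ of the unit vector $\nabla^Nf$, one has $|\nabla f|\le1$, hence $H/|\nabla f|\ge H$. Combining the two identities above,
\[\int_{\rho/2}^\rho\int_{L_t}H\,dt\le\vol(L_\rho)-\vol(L_{\rho/2})\le\vol(L_\rho).\]
As already noted, Lemma \ref{lem:str3} makes $L_\rho$ a $(k+1,10\delta,\varepsilon'\rho/10)$-strained surface, so after rescaling $N$ by $\rho^{-1}$ the straining radius becomes the fixed quantity $\varepsilon'/10$; the uniform volume bound for fibers of strainer maps from Section \ref{sec:app} then gives $\vol(L_\rho)\le C\rho^m$ with $C=C(n,r,V,\varepsilon')$.

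Finally, applying the mean value theorem to the continuous function $t\mapsto\int_{L_t}H$ on $[\rho/2,\rho]$ produces some $\rho'$ in this interval with
\[\int_{L_{\rho'}}H\le\frac{2}{\rho}\int_{\rho/2}^\rho\int_{L_t}H\,dt\le2C\rho^{m-1},\]
which is the desired bound (up to renaming $C$). The only nonroutine ingredient is the uniform volume estimate for strained surfaces, precisely the content deferred to Section \ref{sec:app}; the rest is standard Riemannian calculus together with the convexity information $H\ge0$ furnished by Claim \ref{clm:conv}.
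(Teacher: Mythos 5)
Your proof is correct and takes essentially the same route as the paper: $H=\di u$ plus the divergence theorem to express $\int H$ in terms of fiber volumes, the coarea formula with $|\nabla f|\le1$ to compare $\int H/|\nabla f|$ with $\int H$, the deferred volume bound $\vol L_t\le Ct^m$ from Proposition \ref{prop:vol}, and a mean value argument over $[\rho/2,\rho]$. The only cosmetic difference is that the paper integrates over $f^{-1}[0,t]$ and differentiates in $t$, whereas you integrate directly over $f^{-1}[\rho/2,\rho]$, which if anything sidesteps any worry about critical points of $f|_M$ inside $B(p,\sigma)$.
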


\begin{proof}
Since $H=\di u$, the divergence theorem and the coarea formula imply
\[\frac d{dt}\vol L_t=\int_{L_t}\frac H{|\nabla f|}.\]
Since the norm of the gradient of $f$ is almost $1$ by Claim \ref{clm:grad}, the right-hand side is almost equal to $\int_{L_t}H$.
On the other hand, since $L_t$ is a modified $(k+1,10\delta,\varepsilon't/10)$-strained surface, it follows from Proposition \ref{prop:vol}, Remark \ref{rem:vol1}, and rescaling that
\[\vol L_t\le Ct^m,\]
where $C=C(n,r,V,\varepsilon')$.
The proof of this fact is deferred to the next section.
Together with the differential equation and the almost equality mentioned above, this shows the claim.
\end{proof}

\begin{proof}[Proof of Proposition \ref{prop:ind}]
Since $L=L_t$ is a modified $(k+1,10\delta,\varepsilon't/10)$-strained surface for any $t\in[\sigma,\rho]$, applying the inductive assumption of (the generalized version of) Theorem \ref{thm:ind} for $k+1$ to the rescaled one $t^{-1}L_t$, we obtain
\begin{equation}\label{eq:ass}
\int_L\Sc_L^-<C\left(t^{m-2}+\int_L K_L^+\right)<C\left(1+\int_L K_L^+\right),
\end{equation}
where $C=C(n,r,V,\varepsilon')$.
In the following calculations, $C$ will denote various such constants depending only on $n$, $r$, $V$, and $\varepsilon'$.

By the Gauss formula \eqref{eq:gau},
\[\Sc_M=\Sc_L+2\Ric(u,u)-G.\]
Since $G>0$ by Claim \ref{clm:conv}, we have
\begin{align*}
\Sc^-_M&\le\Sc^-_L+2\Ric^-(u,u)+G\\
&=\Sc^-_L+2\Ric^+(u,u)-2\Ric(u,u)+G\\
&\le\Sc^-_L+2mK^+_M-2\Ric(u,u)+G
\end{align*}
(in fact the positivity of $G$ is not necessary to obtain a similar estimate; compare with the Trivial inequality (i) for $\Sc^+_M$ in \cite[4.5]{Pet:int}).
Integrating both sides, we get
\begin{equation}\label{eq:sc}
\int_{f^{-1}[\sigma',\rho']}\Sc^-_M\le\int_{f^{-1}[\sigma',\rho']}\left(\Sc^-_L+2mK^+_M-2\Ric(u,u)+G\right),
\end{equation}
where $\sigma':=3\sigma/2$ and $\rho'$ is as in Claim \ref{clm:H}.
We may assume that $\sigma>0$ since the statement for $\sigma=0$ follows by taking the limit.

By \eqref{eq:fp}, the left-hand side of \eqref{eq:sc} is greater than that of \eqref{eq:ind}, provided $\delta$ is small enough.
We will show that each term of the right-hand side of \eqref{eq:sc} is less than that of \eqref{eq:ind}.
For the second term, this immediately follows from \eqref{eq:fp} as above.
For the third term, we have
\begin{align*}
-\int_{f^{-1}[\sigma',\rho']}\Ric(u,u)&=-\int_{f^{-1}[\sigma',\rho']}G-\int_{L_{\sigma'}}H+\int_{L_{\rho'}}H\tag{Bochner formula \eqref{eq:boc}}\\
&\le C\rho^{m-1}.\tag{Claims \ref{clm:conv}, \ref{clm:H}}
\end{align*}
For the first term, we have
\begin{align*}
\int_{f^{-1}[\sigma',\rho']}\Sc^-_L&\le C\int_{\sigma'}^{\rho'}dt\int_L\Sc^-_L\tag{Coarea formula, Claim \ref{clm:grad}}\\
&\le C\int_{\sigma'}^{\rho'}dt\left(1+\int_LK^+_L\right)\tag{Inductive assumption \eqref{eq:ass}}\\
&\le C\int_{\sigma'}^{\rho'}dt\left(1+\int_L\left(K^+_M+G\right)\right)\tag{Claim \ref{clm:G}}\\
&\le C\rho+C\int_{f^{-1}[\sigma',\rho']}\left(K^+_M+G\right).\tag{Coarea formula}
\end{align*}
Hence it remains to estimate the integral of $G$, the same as the last term of \eqref{eq:sc}.
\begin{align*}
\int_{f^{-1}[\sigma',\rho']}G&=\int_{f^{-1}[\sigma',\rho']}\Ric(u,u)-\int_{L_{\sigma'}}H+\int_{L_{\rho'}}H\tag{Bochner formula \eqref{eq:boc}}\\
&\le\int_{f^{-1}[\sigma',\rho']}mK^+_M+C\rho^{m-1}\tag{Claims \ref{clm:conv}, \ref{clm:H}}
\end{align*}
Combining all the inequalities above, we obtain \eqref{eq:ind}.
\end{proof}

Note that the last estimate on the integral of $G$ is one of the main differences from the CBB case discussed in Section \ref{sec:ad} (compare with the Trivial inequality (i) for $G$ in \cite[4.5]{Pet:int}).

We conclude Step 2 by discussing the two necessary generalizations of Theorem \ref{thm:ind} mentioned in Remark \ref{rem:ind}, so that the inductive assumption can certainly be applied to $t^{-1}L_t$ to obtain \eqref{eq:ass}.
The first one is the modification of a strained surface and the second one is the localization of the statement.
We will only outline them and leave the details to the reader.

\begin{rem}\label{rem:ind1}
Firstly, we have to formulate Theorem \ref{thm:ind} for a modified strained surface defined by modified strainer functions of the form \eqref{eq:f}.
This will be done as follows, for example.
Let $F=(f_1,\dots,f_k)$ be a $(k,\delta,\varepsilon)$-strainer map at $p\in N$ with an opposite strainer map $G=(g_1,\dots,g_k)$.
We define a \textit{modified strainer map} $\tilde F=(\tilde f_1,\dots,\tilde f_k)$ by
\[\tilde f_i:=f_i(\cdot)+\varkappa_{ij}(\delta)\sum_{j=1}^k(g_j(\cdot)-g_j(p)),\]
where $\varkappa_{ij}(\delta)$ are nonnegative functions tending to $0$ as $\delta\to0$ (possibly $\varkappa_{ij}\equiv 0$), which are uniformly bounded above by some fixed function $\bar\varkappa(\delta)$ with the same property.
We formulate Theorem \ref{thm:ind} for such a modified strained map from the beginning of the reverse induction.
This modification leaves the level set of a strainer function nearly unchanged (in particular their almost orthogonality), provided $\delta$ is small enough, and preserves the strict convexity of a strainer function.
Thus all the properties of a strained surface we used remain true, such as the openness and homotopical stability \cite[8.2, 13.1]{LN:geo} and Lemma \ref{lem:str3}.
Then one can define a strictly convex function $f$ as in \eqref{eq:f}, to which the inductive assumption can be applied, and for which Claims \ref{clm:grad} and \ref{clm:conv} hold true, provided $\varkappa(\delta)\gg\bar\varkappa(\delta)$.
Note that the upper bound $\bar\varkappa(\delta)$ will be determined inductively, especially based on Lemma \ref{lem:lin} used in the proof of Claim \ref{clm:grad}.
\end{rem}

\begin{rem}\label{rem:ind2}
Secondly, we have to localize Theorem \ref{thm:ind} so that it can be applied to the rescaled surface $t^{-1}L_t$, because our manifold has no global volume bound after such rescaling.
However, the rescaled manifold still has a local volume bound by the relative volume comparison \`a la Bishop-Gromov (see Section \ref{sec:gcba}).
Therefore the necessary modification will be as follows.
Consider the family of $n$-dimensional Riemannian manifolds with base points (not necessarily compact), such that on the $R$-ball around the base point, it holds that sectional curvature $\le1$, injectivity radius $\ge r>0$, and volume $\le V$.
Then one can bound the curvature integral of a (modified) strained surface contained in the $R/2$-ball around the base point in terms of a constant depending also on $R$.
\end{rem}

\step{Step 3}
We complete the induction step of the proof of Theorem \ref{thm:ind}.
The proof is by contradiction.
Suppose Theorem \ref{thm:ind} holds for $k+1$ but not for $k$, where $k\le n-3$.
More precisely, we fix sufficiently small $\delta>0$ such that Proposition \ref{prop:ind} holds true with $\delta$ replaced by $C\delta$, where $C=C(n,r,V)$ is a constant from Theorem \ref{thm:res}, and assume that Theorem \ref{thm:ind} for $k$ does not hold for this fixed $\delta$.

By precompactness, we may assume that there are $N_j\in\mathcal M(n,r,V)$ converging to a GCBA space $N$ and $(k,\delta,\varepsilon)$-strained surfaces $M_j\subset N_j$ converging to $M\subset N$ such that 
\[\lim_{j\to\infty}\frac{\int_{M_j}\Sc_{M_j}^-}{1+\int_{M_j}K_{M_j}^+}=\infty.\]
By the compactness of $M$, the problem reduces to the following local claim.

\begin{clm}[cf.\ {\cite[4.6]{Pet:int}}]\label{clm:lim}
For any $p\in M$, there exists $\rho>0$ such that
\[\liminf_{j\to\infty}\frac{\int_{B(p_j,\rho/2)\cap M_j}\Sc_{M_j}^-}{1+\int_{M_j}K_{M_j}^+}<\infty\]
for some (and thus any) $p_j\in M_j$ converging to $p$.
\end{clm}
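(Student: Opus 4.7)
\medskip
\textit{Proof plan.}
We proceed by induction on the packing number $\pack\delta(\Sigma_p)$, which takes values in the bounded set of positive integers between $\pack\delta(\mathbb S^{n-1})$ and $C(n,r,V,\delta)$. Fix $p\in M$ and distinguish two cases following Definition \ref{dfn:bad}.

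Suppose first that $p$ is good in $M\cap\bar B(p,\rho_0)$ for some $\rho_0>0$, so there is $q\in M$ near $p$ which is a $(1,\delta_0,\varepsilon_0)$-strainer on $\partial B(q,|pq|)$. Combining this with Lemma \ref{lem:str1} applied at $q$, together with angle monotonicity and the stability/liftability of strainers (Remark \ref{rem:str}), the strainer structure extends to a full annulus $A(q;\sigma,\rho_1)\cap M$ with $2\sigma<\rho_1$ whose middle slab $A(q;\sigma,\rho_1/2)$ contains a ball $B(p,\rho_1/4)$. Applying Proposition \ref{prop:ind} with center $q_j\in M_j$ converging to $q$ then yields the required bound on $B(p_j,\rho_1/4)$.

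Otherwise, $p$ is bad in $M\cap\bar B(p,\rho)$ for all sufficiently small $\rho>0$. Apply Theorem \ref{thm:res} to produce $\hat p_j\in M_j$ converging to $p$ and $\sigma_j\to 0$ such that $\hat p_j$ is a $(1,C\delta,\delta|\hat p_jx|)$-strainer at every $x\in A(\hat p_j;\sigma_j,\rho)$. Proposition \ref{prop:ind} then bounds $\int_{A(\hat p_j;\sigma_j,\rho/2)\cap M_j}\Sc_{M_j}^-$ by $C(1+\int_{M_j}K_{M_j}^+)$. If $\sigma_j=0$ along a subsequence the annulus exhausts the punctured ball and we are done; otherwise, by Theorem \ref{thm:res}(2), the rescaled sequence $(\sigma_j^{-1}N_j,\sigma_j^{-1}M_j,\hat p_j)$ subconverges to $(\hat N,\hat M,\hat p)$ with $\pack\delta(\Sigma_x)\le\pack\delta(\Sigma_p)-1$ for every $x\in\hat M\cap\bar B(\hat p,1)$. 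The inductive hypothesis on the packing number, applied at each such $x$ and combined with a compactness argument to cover $\hat M\cap\bar B(\hat p,1)$ by finitely many balls, provides local bounds in the rescaled setting; un-rescaling (both sides of the target inequality scale as $\lambda^{n-k-2}$ under a metric scaling by $\lambda$, so its form is preserved up to additive and multiplicative constants) produces a bound on the residual $B(\hat p_j,\sigma_j)\cap M_j$ in the original metric. Adding this to the annulus bound gives the claim for $B(p_j,\rho/4)\subset B(\hat p_j,\rho/2)$.

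\textit{Main obstacle.} The principal technical difficulty is the rescaling step in the bad case: the rescaled manifolds $\sigma_j^{-1}N_j$ have volume $\sigma_j^{-n}V\to\infty$ and hence do not lie in $\mathcal M(n,r,V)$. As noted in Remark \ref{rem:ind}, this requires invoking localized versions of Theorem \ref{thm:ind} together with the supporting Proposition \ref{prop:ind} and Proposition \ref{prop:vol}, replacing the global volume bound with the Bishop-type local volume bound that survives rescaling. A secondary issue is the good case: extracting a full annulus with the required scale-dependent straining radius from the initial sphere-level strainer, which is precisely what Lemma \ref{lem:str1} and angle monotonicity at $q$ are designed to provide.
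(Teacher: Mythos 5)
Your overall strategy—induction on the packing number $\pack\delta(\Sigma_p)$, splitting into the good/bad cases of Definition~\ref{dfn:bad}, and in the bad case applying Theorem~\ref{thm:res} followed by Proposition~\ref{prop:ind} on the annulus and the inductive hypothesis on the rescaled limit—matches the paper's proof, and your treatment of the bad case (including the remark about localizing the volume bound under rescaling) is essentially identical to the paper's.

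The good case, however, contains a genuine gap. You cover a neighborhood of $p$ by a \emph{single} annulus $A(q;\sigma,\rho_1)$ around $q$, claiming its ``middle slab'' $A(q;\sigma,\rho_1/2)$ contains $B(p,\rho_1/4)$. Unwinding the inclusions, $B(p,\rho_1/4)\subset A(q;\sigma,\rho_1/2)$ forces $\sigma\le |pq|-\rho_1/4$ and $|pq|+\rho_1/4\le\rho_1/2$, hence $|pq|\le\rho_1/4$ and $\sigma\le 0$. But $\sigma=0$ would require $q$ to be a strainer on the full punctured ball $B(q,\rho_1)\setminus\{q\}$, which is not what the goodness hypothesis gives you: it only gives a $(1,\delta',\varepsilon')$-strainer on the sphere $\partial B(q,|pq|)$ (with fixed straining radius), and neither Lemma~\ref{lem:str1} applied at $q$ (whose radius $\rho_q$ is unrelated to $|pq|$) nor angle monotonicity lets you propagate the straining all the way inward to $q$. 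What you actually can cover with the $q$-annulus is a small ball $B(p,\xi)$ whose radius $\xi$ is controlled by the width of the thin annulus around $\partial B(q,|pq|)$ on which $q$ is a strainer; that is enough for the claim (which only asks for \emph{some} $\rho>0$), but it is not $\rho_1/4$. The paper sidesteps the whole issue cleanly by using \emph{two} annuli: $A(p;\sigma,\rho/2)$ (with $p$ a strainer there, by the choice of $\rho$ via Lemma~\ref{lem:str1}) covers everything outside a small hole $B(p,\sigma)$, and $A(q;\sigma',\rho'/2)$, whose inner radius $\sigma'$ is close to $|pq|$ rather than close to $0$, only needs to cover the hole; this avoids any need for straining near $q$. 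If you insist on a single $q$-annulus you must (i) replace $\rho_1/4$ by a smaller radius, and (ii) justify straining at inner radii down to $\sigma<|pq|$, which is not supplied by the hypotheses.
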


\begin{proof}
We prove it by induction on the value
\[\sup_{p\in M}\pack\delta(\Sigma_p)\]
for the above fixed $\delta$.
Note that this value is uniformly bounded from below and above, as observed at the beginning of Section \ref{sec:res}.
Let $0<\rho<\varepsilon$ be small enough as in the proof of Theorem \ref{thm:res}, so that in particular Lemma \ref{lem:str1} holds.
The proof is divided into two cases:
\begin{enumerate}
\item $p$ is $C\delta$-good in $M\cap\bar B(p,\rho)$; or
\item $p$ is $C\delta$-bad in $M\cap\bar B(p,\rho)$,
\end{enumerate}
where Theorem \ref{thm:res} will be applied to the latter (see Definition \ref{dfn:bad} for good and bad points).

First we consider the good case (1).
By definition, there exists $q\in M\cap\bar B(p,\rho)$ that is a $(1,C\delta,\varepsilon')$-strainer on $\partial B(q,|pq|)$ for some $\varepsilon'>0$.
Since Lemma \ref{lem:str2} also holds, $B(p,\rho/2)$ is covered by two metric annuli $A(p;2\sigma,\rho/2)$ and $A(q;2\sigma',\rho'/2)$ such that $p$ and $q$ are strainers on $A(p;\sigma,\rho)$ and $A(q;\sigma',\rho')$, respectively, as in Proposition \ref{prop:ind} with $\delta$ replaced by $C\delta$.
Take arbitrary sequences $p_j,q_j\in M_j$ converging to $p,q$.
Then by Remark \ref{rem:str}(2) and Proposition \ref{prop:ind}, we have
\begin{align*}
\int_{B(p_j,\rho/2)\cap M_j}\Sc_{M_j}^-&<\int_{A(p_j;2\sigma,\rho/2)\cap M_j}\Sc_{M_j}^-+\int_{A(q_j;2\sigma',\rho'/2)\cap M_j}\Sc_{M_j}^-\\
&<C'\left(1+\int_{M_j}K_{M_j}^+\right)
\end{align*}
for any sufficiently large $j$, where $C'$ is another constant independent of $j$.

Next we consider the bad case (2).
Let $\hat p_j$, $\sigma_j$ be as in Theorem \ref{thm:res}.
As above, by Theorem \ref{thm:res}(1) and Proposition \ref{prop:ind}, we have
\[\int_{A(\hat p_j,2\sigma_j,\rho/2)\cap M_j}\Sc_{M_j}^-<C'\left(1+\int_{M_j}K_{M_j}^+\right).\]
If $\sigma_j\equiv0$ for some subsequence, then we are done.
Thus we may assume $\sigma_j>0$ for any large $j$, as in Theorem \ref{thm:res}(2).
Passing to a subsequence, we obtain a new limit $(\hat N,\hat M,\hat p)$ of $(\sigma_j^{-1}N_j,\sigma_j^{-1}M_j,\hat p_j)$ with
\[\sup_{x\in\hat M}\pack\delta(\Sigma_x)\le\pack\delta(\Sigma_p)-1.\]
In particular, since the left-hand side is uniformly bounded below by $\pack\delta(\mathbb S^{n-1})$, the base case of the induction must be included in the previous two cases (i.e., $p$ is good or $\sigma_j\equiv 0$; notice that the condition of the base case implies that $M$ is almost a Riemannian manifold).
By the inductive assumption and the compactness of $\bar B(\hat p,2)\cap\hat M$, we get
\[\liminf_{j\to\infty}\frac{\int_{B(\hat p_j,2\sigma_j)\cap M_j}\Sc_{M_j}^-}{\sigma_j^{m-1}+\int_{M_j}K_{M_j}^+}=\liminf_{j\to\infty}\frac{\int_{\sigma_j^{-1}B(\hat p_j,2\sigma_j)\cap M_j}\Sc_{\sigma_j^{-1}M_j}^-}{1+\int_{\sigma_j^{-1}M_j}K_{\sigma_j^{-1}M_j}^+}<\infty,\]
where $m+1=\dim M_j$ (strictly speaking, as in Remark \ref{rem:ind2}, we need the localized version of Claim \ref{clm:lim} when using it as the inductive assumption).
Together with the previous estimate on the annulus, this completes the proof.
\end{proof}

The proof of Theorem \ref{thm:ind} is now complete.
\end{proof}

\begin{rem}\label{rem:d}
Let us finally summarize how to choose the constant $\delta_0=\delta_0(n,r,V)$ of Convention \ref{conv:d}.
Following the above proof, we will define a decreasing reverse sequence of constants $\delta_k$ ($0\le k\le n-2$) inductively such that Theorem \ref{thm:ind} for $k$ holds provided $\delta<\delta_k$.
Step 1 shows that $\delta_{n-2}$ can be chosen to be a constant depending only on $n$, for which the results of Lytchak-Nagano \cite[8.2, 13.1]{LN:geo} hold.
Suppose $\delta_{k+1}$ was chosen.
The argument in Step 2 shows that Proposition \ref{prop:ind} holds provided $\delta\ll\delta_{k+1}$ (it depends especially on Lemma \ref{lem:lin} used in Claim \ref{clm:conv}).
In Step 3, we fixed small $\delta$ such that Proposition \ref{prop:ind} with $\delta$ replaced by $C\delta$ holds, where $C=C(n,r,V)$ is the constant of Theorem \ref{rem:res}.
Combining these two steps shows that $\delta_k$ can be chosen to be a constant much less than $C^{-1}\delta_{k+1}$.
After such $(n-2)$ steps, one can determine the actual value of $\delta_0$ explicitly.
In particular, the constant $\delta_0$ comes essentially from the doubling constant for our class $\mathcal M(n,r,V)$, as so does the constant $C$ of Theorem \ref{thm:res}.
\end{rem}

\section{Appendix}\label{sec:app}

We show two auxiliary results used in the previous section.
The first one is an elementary lemma from linear algebra, which was used in the proof of Claim \ref{clm:conv} (and also in Remarks \ref{rem:ind1} and \ref{rem:d}).

\begin{conv}
Here we temporarily assume that $\delta$ is a positive number less than some small constant depending only on $n$.
We also denote by $\varkappa(\delta)$ various positive functions depending only on $n$ such that $\varkappa(\delta)\gg\delta$ and $\varkappa(\delta)\to0$ as $\delta\to0$, with additional symbols to distinguish different ones.
\end{conv}

The lemma asserts the existence of such $\varkappa_1\gg\varkappa_2$ satisfying the statement.

\begin{lem}\label{lem:lin}
Let $E$ be an $n$-dimensional vector space with inner product $\langle\ ,\ \rangle$.
Suppose $u_1,\dots,u_k\in E$ are $\delta$-almost orthonormal in the sense that
\[|\langle u_i,u_i\rangle-1|<\delta,\quad|\langle u_i,u_j\rangle|<\delta\]
for any $1\le i\neq j\le k$.
Let $x,y\in E$ be unit vectors such that
\begin{equation}\label{eq:lin}
\langle x,u_i\rangle<0,\quad\langle y,u_i\rangle<-\varkappa_1(\delta)
\end{equation}
for any $1\le i\le k$.
Then $\langle x,y\rangle>\varkappa_2(\delta)$.
\end{lem}

\begin{proof}
The proof is by induction on $k$.
The base case $k=1$ is trivial.
Suppose $k\ge 2$ and there exist such functions $\bar\varkappa_1\gg\bar\varkappa_2$ satisfying the statement for $k-1$.
We may assume $\varkappa_1\gg\bar\varkappa_1$ and $\varkappa_2\ll\bar\varkappa_2$.
Let $H$ denote the hyperplane spanned by $u_1,\dots,u_{k-1}$ and $H^\perp$ the orthogonal complement in $E$.
We will denote the corresponding orthogonal decomposition as
\[x=x_H+x^\perp,\quad y=y_H+y^\perp,\quad u_k=u_k^H+u_k^\perp.\]
Then we have
\[\langle x,y\rangle=\langle x_H,y_H\rangle+\langle x^\perp,y^\perp\rangle.\]
By the inductive assumption,
\[\langle x_H,y_H\rangle\ge\bar\varkappa_2(\delta)|x_H||y_H|\]
(possibly the norms are $0$).
The $\delta$-almost orthonormality implies $|u_k^H|<C\delta$ and $|u_k^\perp|>1-C\delta$, where $C$ denotes a positive constant depending only on $k$.
This, together with the assumption \eqref{eq:lin}, easily yields
\[\langle x^\perp,u_k^\perp\rangle<C\delta,\quad|y^\perp|>\varkappa_1(\delta)/2,\quad|y_H|>\varkappa_1(\delta)/2\]
since $\varkappa_1(\delta)\gg C\delta$.
Note that $y^\perp$ is opposite to $u_k^\perp$, but this is not necessarily the case for $x^\perp$.
Hence the proof is divided into three cases:
\begin{enumerate}
\item $\langle x^\perp,y^\perp\rangle\ge0$ and $|x^\perp|\ge1/2$;
\item $\langle x^\perp,y^\perp\rangle\ge0$ and $|x^\perp|<1/2$;
\item $\langle x^\perp,y^\perp\rangle<0$.
\end{enumerate}
In the case (1), we have $\langle x^\perp,y^\perp\rangle>\varkappa_1(\delta)/4$.
In the case (2), we have $|x_H|\ge1/2$ and thus $\langle x_H,y_H\rangle>\bar\varkappa_2(\delta)\varkappa_1(\delta)/4$ by the inductive assumption.
In the case (3), since $x^\perp$ has the same direction as $u_k^\perp$, we have $|x^\perp|<2C\delta$ and $|x_H|>1-2C\delta$.
Therefore
\[\langle x,y\rangle=\langle x_H,y_H\rangle+\langle x^\perp,y^\perp\rangle>\bar\varkappa_2(\delta)\varkappa_1(\delta)/4-2C\delta.\]
Since $\varkappa_1(\delta)\gg\bar\varkappa_2(\delta)\gg\varkappa_2(\delta)\gg C\delta$, the claim holds in every case.
\end{proof}

\begin{rem}
If $\langle u_i,u_j\rangle<0$ instead of almost orthonormality, then it is easier to see that $\langle x,u_i\rangle<0$, $\langle y,u_i\rangle<0$ imply $\langle x,y\rangle>0$ (because $\langle x^\perp,y^\perp\rangle>0$).
This is enough to prove Claim \ref{clm:conv} if we work in the class of corner surfaces as in \cite[3.1]{Pet:int}, rather than the modified strained surfaces we used.
\end{rem}

The second one is a uniform upper bound for the volume of strained surfaces, which was used in the proof of Claim \ref{clm:H}.
Here we follow Convention \ref{conv:d} as before, i.e., $\delta<\delta_0(n,r,V)$ and $\varepsilon<r/10$ (but see also Remark \ref{rem:vol2} below).

\begin{prop}\label{prop:vol}
Let $M$ be a $(k,\delta,\varepsilon)$-strained surface in $N\in\mathcal M(n,r,V)$, where $k\le n$.
Then we have
\[\vol M\le C(n,r,V,\varepsilon).\]
\end{prop}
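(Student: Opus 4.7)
The plan is to prove Proposition \ref{prop:vol} by reverse induction on $k$, following the same architecture as the proof of Theorem \ref{thm:ind} but without any curvature integrals---just coarea and packing---so the argument is considerably simpler. The base case $k=n$ is immediate: an $(n,\delta,\varepsilon)$-strainer map has almost-orthonormal gradient components, so it is almost bilipschitz on any tiny ball of $N$, and distinct points of the $0$-dimensional fiber $M$ must be at distance $>\varepsilon/2$ from one another. Combined with the Gromov-type packing bound on $N\in\mathcal M(n,r,V)$, this gives $|M|\le C(n,r,V,\varepsilon)$.

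For the inductive step, assume the statement for $k+1$. It suffices to show that for each $p\in M$ there is some $\rho>0$ with $\vol(M\cap B(p,\rho/2))\le C(n,r,V,\varepsilon)$; compactness of $M$ together with packing then yields the global bound. To find $\rho$, I apply the good/bad dichotomy of Definition \ref{dfn:bad} exactly as in Step 3 of Theorem \ref{thm:ind}. The core local ingredient is a volume analog of Proposition \ref{prop:ind}: if $p$ is a $(1,C\delta,\varepsilon'|px|)$-strainer at every $x\in A(p;\sigma,\rho)$, then
\[\vol(M\cap A(p;\sigma,\rho/2))\le C(n,r,V,\varepsilon')\,\rho^{m+1},\quad m+1=\dim M.\]
The argument is a stripped-down version of the one in Section \ref{sec:prf}: Lemma \ref{lem:str3} promotes $(F,|p\cdot|)$ to a $(k+1,10\delta,\varepsilon'|px|)$-strainer map, so each level set $L_t=\{|p\cdot|=t\}\cap M$ is a $(k+1,10\delta,\varepsilon't/10)$-strained surface; the inductive hypothesis applied in rescaled form (as explained in Remark \ref{rem:ind}) gives $\vol L_t\le Ct^m$, and the coarea formula combined with $|\nabla f|\simeq1$ from Claim \ref{clm:grad} integrates this up to $C\rho^{m+1}$.

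In the good case, this estimate covers $B(p,\rho/2)$ directly via two annuli around $p$ and its companion $q$. In the bad case, I feed $p$ into Theorem \ref{thm:res} to obtain $\hat p_j$ and $\sigma_j$: the annulus $A(\hat p_j;\sigma_j,\rho/2)$ is controlled by the estimate above, while $\sigma_j^{-1}B(\hat p_j,\sigma_j)$ converges to a bounded subset of a new limit $(\hat N,\hat M,\hat p)$ on which $\pack\delta(\Sigma_x)$ has dropped by at least one; a secondary induction on the (globally finite) packing number $\pack\delta(\Sigma_p)$ bounds the volume in the rescaled ball, and unscaling gives $\vol(M_j\cap B(\hat p_j,\sigma_j))\le C\sigma_j^{m+1}\to 0$. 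Adding the two pieces yields the desired local bound.

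The only real technical point, already flagged in Remark \ref{rem:ind}, is to formulate the proposition in a sufficiently local form so that the inductions survive rescaling despite the loss of the global volume bound after rescaling by $\sigma_j^{-1}$; this is done by replacing $V$ with the Bishop-type local volume bound on balls of controlled radius, which is uniform across $\mathcal M(n,r,V)$ and preserved by rescalings with $\lambda\ge1$. Aside from this bookkeeping, the proof is a direct, simpler echo of Theorem \ref{thm:ind}, with the Gauss, Bochner, and Gauss--Bonnet ingredients all absent.
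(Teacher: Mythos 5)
Your proposal follows the paper's own proof almost verbatim: reverse induction on $k$, base case $k=n$ via injectivity of the full strainer map and $\varepsilon$-discreteness of the fiber, inductive step via the good/bad dichotomy of Definition~\ref{dfn:bad} with a secondary induction on $\pack\delta(\Sigma_p)$, the volume analogue of Proposition~\ref{prop:ind} obtained from Lemma~\ref{lem:str3}, the coarea formula, and Claim~\ref{clm:grad}, and the localization needed to survive the $\sigma_j^{-1}$-rescaling as flagged in Remark~\ref{rem:ind}. The paper states this tersely and ``leaves the details to the reader''; you have simply written those details out, and correctly so.
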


There are two proofs of the above proposition.
The first one is the analogy of the proof of Theorem \ref{thm:ind}, based on the rescaling theorem \ref{thm:res}.
The second one is much simpler and more general, which uses finiteness results of Lytchak-Nagano \cite[10.1, 10.5]{LN:geo}.
We will outline both the proofs below, but leave the details to the reader.
The author is grateful to Alexander Lytchak for providing the second proof.

\begin{rem}\label{rem:vol1}
The two generalizations mentioned in Remark \ref{rem:ind} are applicable to the above proposition as well.
Indeed, the first proof, the analogy of Theorem \ref{thm:ind}, requires the localization as in Remark \ref{rem:ind2} to complete the induction step.
However, unlike the previous case, the modification as in Remark \ref{rem:ind1} is unnecessary for the proof of Proposition \ref{prop:vol} because the latter does not need the obtuse angle condition (though the modification is possible).
\end{rem}

\begin{rem}\label{rem:vol2}
The constant $C$ actually does not depend on the straining radius $\varepsilon$ and also the upper bound $\delta_0$ for $\delta$ depends only on $n$.
These follow from the second simplified proof due to Lytchak.
Moreover, this proof extends directly to strained surfaces in general GCBA spaces, where the strainer is in the original weaker sense of Lytchak-Nagano \cite[7.2]{LN:geo}.
\end{rem}

\begin{proof}[Proof 1]
The proof is by reverse induction on $k$, which has exactly the same structure as the proof of Theorem \ref{thm:ind} in the previous section.
The base case $k=n$ immediately follows from the fact that any $(n,\delta,\varepsilon)$-strainer map is injective on an $\varepsilon$-ball, and hence its fiber is $\varepsilon$-discrete (actually it is a bi-Lipschitz open embedding; cf.\ \cite[11.2]{LN:geo}, Lemma \ref{lem:str3}).

To prove the induction step, suppose there are $N_j\in\mathcal M(n,r,V)$ and $(k,\delta,\varepsilon)$-strained surfaces $M_j\subset N_j$ such that $\vol M_j\to\infty$ as $j\to\infty$.
We may assume $N_j$ converges to a GCBA space $N$ and $M_j$ converges to $M\subset N$.

The proof proceeds along the same line as Step 3 of the proof of Theorem \ref{thm:ind}.
We use induction on the packing number of the space of directions of the limit space.
The difference is that we consider much simpler value $\vol=\int$ instead of $(1+\int K^+)^{-1}\cdot\int\Sc^-$.
The inductive assumption of the induction on $k$ will be used as follows.
Under the same assumptions as in Proposition \ref{prop:ind}, by using the coarea formula and the almost orthogonality condition of Claim \ref{clm:grad} (without the obtuse angle condition), we have
\[\vol A(p;\sigma,\rho)\le C\int_\sigma^\rho\vol L_tdt\le C\rho,\]
where the second inequality follows from the inductive assumption (and rescaling).
Using this inequality instead of \eqref{eq:ind} and arguing as in Step 3, one can complete the proof.
\end{proof}

\begin{proof}[Proof 2]
The proof is by reverse induction on $k$ as above.
First recall that our manifold $N$ can be covered by a uniformly finite number of tiny balls of radius $r/10$ (see Section \ref{sec:gcba}).
Thus it suffices to show the statement on each tiny ball.

Here we use the original infinitesimal definition of a strainer by Lytchak-Nagano \cite[7.2]{LN:geo} to complete the induction step.
We fix $\delta$, which will be determined later.
The base case $k=n$ follows directly from their finiteness result \cite[10.5]{LN:geo}.
Note that the \textit{capacity} in their paper (see \cite[\S5.2]{LN:geo}) is nothing but the doubling constant in our paper (see Section \ref{sec:gcba}).
Therefore it depends only on $n$, $r$, and $V$, and so does the upper bound for the volume of $M$ (thus independent of $\varepsilon$).

To prove the induction step, we use \cite[10.5]{LN:geo} again, which says that the strainer map $F$ defining $M$ can be extended except at (uniformly) finitely many points (as Lytchak-Nagano's strainer map).
Moreover, by \cite[10.1]{LN:geo}, the number of strainer functions $f_i$ needed to extend $F$ is uniformly bounded above in terms of the capacity.
Therefore $M$ is covered by a uniformly finite number of open subsets $U_i$ in $M$ where $(F,f_i)$ is a strainer map.
The claim now follows by applying the coarea formula to $f_i$ on $U_i$, together with the inductive assumption for the fibers of $(F,f_i)$.
In each induction step the error $\delta$ gets worse as in \cite[10.5]{LN:geo}, but eventually its upper bound $\delta_0$ can be chosen to be a constant depending only on $n$.
\end{proof}

\begin{rem}\label{rem:vol3}
The main difference between the two proofs is explained as follows.
Both are by reverse induction on $k$, so the basic idea is to extend a strainer map $F$ defining a strained surface $M$ by adding another strainer function $f$, as in Lemma \ref{lem:str3} or \cite[9.4]{LN:geo}, and to apply the coarea formula to $f|_M$ together with the inductive assumption for the fibers of $f|_M$.
The uniform boundedness of the volume follows if one could cover $M$ by a uniformly finite number of subsets $U_i$ where $F$ can be extended.
In the first proof, such subsets $U_i$ are annuli, as in Proposition \ref{prop:ind}, and the uniform finiteness of the covering comes from the rescaling theorem \ref{thm:res} and the precompactness.
However, in the second proof, $U_i$ are not annuli, but have more complicated shapes.
The uniform finiteness of the covering comes essentially from that of strainer functions involved in the extension of $F$ as in \cite[10.1]{LN:geo}.
The latter is only possible because of the infinitesimal definition of a strainer by Lytchak-Nagano and the GCBA condition, and has no (direct) counterpart in CBB geometry (however, compare with \cite[1.1]{MY:ob} for the noncollapsing case).
On the other hand, the advantage of the first proof is that it is applicable to (collapsing) Alexandrov spaces (cf.\ \cite[6.16]{F:fib}).
The author is not sure whether there is a simplified proof of Theorem \ref{thm:main} from this point of view.
\end{rem}

\end{document}